\numberwithin{equation}{section}
\def\bb#1\eb{\textcolor{blue}
{#1}} %
\def\br#1\er{\textcolor{red}
{#1}} %
\def\bv#1\ev{\textcolor{green}
{#1}} %
\def\bc#1\ec{\textcolor{cyan}
{#1}} %
\def\Xint#1{\mathchoice
  {\XXint\displaystyle\textstyle{#1}}%
  {\XXint\textstyle\scriptstyle{#1}}%
  {\XXint\scriptstyle\scriptscriptstyle{#1}}%
  {\XXint\scriptscriptstyle\scriptscriptstyle{#1}}%
  \!\int}
\def\XXint#1#2#3{{\setbox0=\hbox{$#1{#2#3}{\int}$}
  \vcenter{\hbox{$#2#3$}}\kern-.5\wd0}}
\def\-int{\Xint -}
\newcommand{\R}{\mathbb{R}}
\newcommand{\N}{\mathbb{N}}
\newcommand{\X}{\mathbb{X}_{0}}
\newcommand{\C}{\mathcal{C}}
\newcommand{\A}{\mathbb{H}^{s}(\Omega)}
\newcommand{\Y}{H^{1}_{0, L}(y^{1-2s})}
\DeclareMathOperator{\dive}{div}
\DeclareMathOperator{\e}{\varepsilon}
\newtheorem{lem}{Lemma}[section]
\newtheorem{thm}{Theorem}[section]
\newtheorem{defn}{Definition}[section]
\newtheorem{remark}{Remark}[section]
\begin{document}
\title[Multiple solutions for superlinear fractional problems via theorems of mixed type]{Multiple solutions for superlinear fractional problems via theorems of mixed type} 

\author[V. Ambrosio]{Vincenzo Ambrosio}
\address[V. Ambrosio]{Dipartimento di Scienze Pure e Applicate (DiSPeA),
Universit\`a degli Studi di Urbino `Carlo Bo'
Piazza della Repubblica, 13
61029 Urbino (Pesaro e Urbino, Italy)}
\email{vincenzo.ambrosio@uniurb.it}

\keywords{Fractional Laplacians; multiple solutions; $\nabla$-theorem; extension method}
\subjclass[2010]{35A15, 35J60, 35R11, 45G05}

\begin{abstract}
In this paper we investigate the existence of multiple solutions for the following two fractional problems
\begin{equation*}
\left\{
\begin{array}{ll}
(-\Delta_{\Omega})^{s} u-\lambda u= f(x, u) &\mbox{ in } \Omega\\
u=0 &\mbox{ in } \partial \Omega
\end{array}
\right.
\end{equation*}
and
\begin{equation*}
\left\{
\begin{array}{ll}
(-\Delta_{\R^{N}})^{s} u-\lambda u= f(x, u) &\mbox{ in } \Omega\\
u=0 &\mbox{ in } \R^{N}\setminus \Omega,
\end{array}
\right.
\end{equation*}
where $s\in (0,1)$, $N>2s$, $\Omega$ is a smooth bounded domain  of $\R^{N}$, and  $f:\bar{\Omega}\times \R\rightarrow \R$ is a superlinear continuous function which does not satisfy the well-known Ambrosetti-Rabinowitz condition. Here $(-\Delta_{\Omega})^{s}$ is the spectral Laplacian and $(-\Delta_{\R^{N}})^{s}$ is the fractional Laplacian in $\R^{N}$. By applying variational theorems of mixed type due to Marino and Saccon and the Linking Theorem, we prove the existence of multiple solutions for the above problems.
\end{abstract}

\maketitle
\section{Introduction}
\noindent
In this paper we focus our attention on the multiplicity of the following two fractional problems
\begin{equation}\label{P1}
\left\{
\begin{array}{ll}
(-\Delta_{\Omega})^{s} u-\lambda u= f(x, u) &\mbox{ in } \Omega\\
u=0 &\mbox{ in } \partial \Omega
\end{array}
\right.
\end{equation}
and
\begin{equation}\label{P}
\left\{
\begin{array}{ll}
(-\Delta_{\R^{N}})^{s} u-\lambda u= f(x, u) &\mbox{ in } \Omega\\
u=0 &\mbox{ in } \R^{N}\setminus \Omega,
\end{array}
\right.
\end{equation}
where $s\in (0, 1)$, $N>2s$, $\lambda \in \R$ and $\Omega\subset \R^{N}$ is a smooth bounded open set. 
Here $(-\Delta_{\Omega})^{s}$ is the spectral Laplacian which is given by 
\begin{align}
(-\Delta_{\Omega})^{s}u(x)= \sum_{k=1}^{+\infty} c_{k} \alpha_{k}^{s} \varphi_{k}(x) \mbox{ for any } u=\sum_{k=1}^{+\infty} c_{k}\varphi_{k}\in C^{\infty}_{c}(\Omega),
\end{align}
where $\{\varphi_{k}\}_{k\in \N}$ denotes the orthonormal basis of $L^{2}(\Omega)$ consisting of eigenfunctions of $-\Delta$ in $\Omega$ with homogeneous Dirichlet boundary conditions associated to the eigenvalues $\{\alpha_{k}\}_{k\in \N}$, that is,
\begin{equation*}
\left\{
\begin{array}{ll}
-\Delta \varphi_{k}=\alpha_{k} \varphi_{k} &\mbox{ in } \Omega\\
\varphi_{k}=0 &\mbox{ in } \partial \Omega.
\end{array}
\right.
\end{equation*}
The fractional Laplacian operator $(-\Delta_{\R^{N}})^{s}$ may be defined for any $u:\R^{N}\rightarrow \R$ belonging to the Schwarz space $\mathcal{S}(\R^{N})$ of rapidly decaying $C^{\infty}$ functions in $\R^{N}$ by 
\begin{align}
(-\Delta_{\R^{N}})^{s} u(x)= \frac{C_{N, s}}{2} \int_{\R^{N}} \frac{2u(x)- u(x+y)- u(x-y)}{|y|^{N+2s}} dy,
\end{align}
where $C_{N, s}$ is a normalizing constant depending only on $N$ and $s$; see \cite{DPV, MBRS} for more details. \\
As observed in \cite{sv3}, these two operators are completely different. Indeed, the spectral operator  $(-\Delta_{\Omega})^{s}$ depends on the domain $\Omega$ considered, while the integral one $(-\Delta_{\R^{N}})^{s}$ evaluated at some point is independent on the domain in which the equation is set. Moreover, in contrast with the setting for the fractional Laplacian, it is not true that all functions are s-harmonic with respect to the spectral fractional Laplacian, up to a small error; see \cite{AV, DSV} for more details.\\
Recently, many papers have appeared dealing with the existence and the multiplicity of solutions to problems driven by these two operators, by applying several variational and topological techniques. In particular, a great attention has been devoted to the study of fractional problems like \eqref{P1} and \eqref{P} involving superlinear nonlinearities with subcritical or critical growth; see for instance \cite{A1, A2, A3, A4, A5, AFP, BCdPS, BMBS, CT, MBRS, PPS, sv1, sv2, sv4, YYY}.
It is worth observing that a typical assumption to study this class of problems is to require that the nonlinearity $f$ verifies the well-known Ambrosetti-Rabinowitz condition \cite{AR}, that is there exist $\mu>2$ and $R>0$ such that
\begin{align}\label{AR}
0< \mu F(x, t) \leq t f(x, t) \mbox{ for any } x\in \Omega, |t|>R.
\end{align}
This condition is quite natural and fundamental not only to guarantee that the Euler-Lagrange functional associated to the problem under consideration has a mountain pass geometry, but also to show that the Palais-Smale sequence of the Euler-Lagrange functional is bounded.
We recall that \eqref{AR} is somewhat restrictive and eliminates many nonlinearities. For instance the function 
\begin{equation}\label{exf}
f(x, t)=2t\log(1+t^{4})+\frac{4t^{5}}{t^{4}+1} \mbox{ with } (x, t)\in \Omega\times \R
\end{equation}
 is superlinear at infinity but does not verify the condition \eqref{AR}.\\
The purpose of this paper is to investigate the multiplicity for the above two fractional problems when the parameter $\lambda$ lies in a suitable neighborhood of any eigenvalue of the fractional operator under consideration, and $f$ is superlinear and subcritical, but does not fulfill \eqref{AR}. \\
More precisely, along the paper we assume that $f:\bar{\Omega}\times \R\rightarrow \R$ is a continuous function satisfying the following conditions
\begin{compactenum}[$(f1)$]
\item there exist $c_{1}>0$ and $p\in (1, 2^{*}_{s}-1)$, with $2^{*}_{s}=\frac{2N}{N-2s}$, such that
$$
|f(x, t)|\leq c_{1}(1+|t|^{p}) \mbox{ for any } (x, t)\in \Omega\times \R;
$$
\item 
$$
\lim_{|t|\rightarrow 0} \frac{f(x, t)}{|t|}=0 \mbox{ uniformly in } x\in \Omega;
$$
\item 
$$
\lim_{|t|\rightarrow \infty} \frac{F(x, t)}{t^{2}}=+\infty  \mbox{ uniformly in } x\in \Omega, 
$$
where $F(x, t)=\int_{0}^{t} f(x, \tau)\, d\tau$;
\item 
there exist $\beta\in (\frac{2Np}{N+2s}, 2^{*}_{s})$, $c_{2}>0$ and $T>0$ such that
\begin{align*}
&f(x, t)t-2F(x, t)>0 \mbox{ for any } x\in \Omega, |t|>0,\\
&f(x, t)t-2F(x, t)\geq c_{2}|t|^{\beta} \mbox{ for any } x\in \Omega, |t|\geq T;
\end{align*}
\item 
$F(x, t)\geq 0$ for any $(x, t)\in \Omega\times \R$.
\end{compactenum}
As a model for $f$ we can take the function defined in \eqref{exf}.
\noindent
Now we state our first main result regarding the multiplicity for the problem \eqref{P1}:
\begin{thm}\label{thm2}
Assume $(f1)$-$(f5)$. Then for any $i\geq 2$ there exists $\delta_{i}>0$ such that for any $\lambda\in (\alpha^{s}_{i}-\delta_{i}, \alpha^{s}_{i})$, problem \eqref{P1} admits at least three nontrivial solutions. 
\end{thm}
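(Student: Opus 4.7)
My plan is to set up problem \eqref{P1} variationally on the Hilbert space $\A$ associated with $(-\Delta_{\Omega})^{s}$ (equivalently, to work on the trace space of $\Y$ via the extension method) and look for critical points of
\[
J_{\lambda}(u)=\tfrac{1}{2}\|u\|_{\A}^{2}-\tfrac{\lambda}{2}\int_{\Omega}u^{2}\,dx-\int_{\Omega}F(x,u)\,dx,\qquad u\in\A.
\]
The natural decomposition is $\A=H^{-}\oplus V_{i}\oplus H^{+}$ with $H^{-}=\mathrm{span}\{\varphi_{1},\dots,\varphi_{i-1}\}$, $V_{i}=\mathbb{R}\varphi_{i}$, and $H^{+}=\overline{\mathrm{span}}\{\varphi_{k}\}_{k\geq i+1}$. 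Choosing $\delta_{i}\in(0,\alpha_{i}^{s}-\alpha_{i-1}^{s})$, the quadratic form $q_{\lambda}(u):=\|u\|_{\A}^{2}-\lambda\|u\|_{L^{2}}^{2}$ is strictly negative on $H^{-}$, positive but arbitrarily small on $V_{i}$ as $\lambda\uparrow\alpha_{i}^{s}$, and uniformly coercive on $H^{+}$ for every $\lambda\in(\alpha_{i}^{s}-\delta_{i},\alpha_{i}^{s})$. This is precisely the geometric scenario for which the mixed-type theorems of Marino--Saccon are designed.

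\textbf{The hard part: Palais--Smale without \eqref{AR}.} Boundedness of PS sequences for $J_{\lambda}$ is the main technical obstacle. For $(u_{n})\subset\A$ with $J_{\lambda}(u_{n})$ bounded and $J_{\lambda}'(u_{n})\to 0$, I would combine
\[
2J_{\lambda}(u_{n})-\langle J_{\lambda}'(u_{n}),u_{n}\rangle=\int_{\Omega}\bigl(f(x,u_{n})u_{n}-2F(x,u_{n})\bigr)\,dx
\]
with the second inequality in $(f4)$ to derive $\|u_{n}\|_{L^{\beta}(\Omega)}^{\beta}\leq C(1+\|u_{n}\|_{\A})$. Splitting $u_{n}=u_{n}^{-}+u_{n}^{0}+u_{n}^{+}$ and testing $J_{\lambda}'(u_{n})$ separately on $u_{n}^{+}$ (where $q_{\lambda}$ is coercive) and on $u_{n}^{-}+u_{n}^{0}$ (finite-dimensional), I would control the nonlinear part via $(f1)$ by $\int|f(x,u_{n})u_{n}|\leq C+C\|u_{n}\|_{L^{p+1}}^{p+1}$, interpolate $\|u_{n}\|_{p+1}$ between $L^{\beta}$ and $L^{2^{*}_{s}}$, and invoke the Sobolev embedding $\A\hookrightarrow L^{2^{*}_{s}}(\Omega)$ to arrive at $\int|f(x,u_{n})u_{n}|\leq C(1+\|u_{n}\|_{\A}^{\gamma})$ with $\gamma<2$. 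The restriction $\beta>2Np/(N+2s)$ is precisely what forces $\gamma<2$; boundedness of $(u_{n})$ follows, and the compact embedding $\A\hookrightarrow L^{p+1}(\Omega)$ upgrades weak to strong convergence in the standard way.

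\textbf{Linking plus $\nabla$-theorem.} From $(f1)$--$(f2)$ one obtains $\rho,\alpha>0$ with $J_{\lambda}\geq\alpha$ on $\partial B_{\rho}\cap H^{+}$, and $(f3)$, $(f5)$ give $J_{\lambda}(u)\to-\infty$ as $\|u\|_{\A}\to\infty$ on $H^{-}\oplus V_{i}$, so $J_{\lambda}\leq 0$ on $\partial Q$ for the cube $Q=\bar B_{R}\cap(H^{-}\oplus V_{i})$ with $R$ large. The classical Linking theorem applied to the pair $(\partial B_{\rho}\cap H^{+},Q)$ yields a first nontrivial critical level $c_{\ast}\geq\alpha$. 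For the remaining two solutions I would verify the Marino--Saccon $(\nabla)$-condition for $J_{\lambda}$ relative to $V_{i}$: for $\delta_{i}$ sufficiently small the quadratic part of $J_{\lambda}'$ dominates the nonlinear perturbation transversally to $V_{i}$ on the relevant sphere, and shrinking $\delta_{i}$ further secures the energy separation
\[
\sup\bigl\{J_{\lambda}(u):u\in\bar B_{\rho}\cap(H^{-}\oplus V_{i})\bigr\}<\inf\bigl\{J_{\lambda}(u):u\in\partial B_{\rho}\cap H^{+}\bigr\}
\]
required by the $\nabla$-theorem. The mixed-type theorem of Marino--Saccon then produces two further critical points whose critical values are distinct from $0$ and from $c_{\ast}$, giving the three nontrivial solutions of \eqref{P1} claimed.
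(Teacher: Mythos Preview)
Your overall strategy—the Marino--Saccon $\nabla$-theorem for two critical points plus a separate Linking argument for the third, with Palais--Smale secured through $(f4)$—matches the paper's. Working directly on $\A$ rather than on the extended space $\Y$ is immaterial, and your interpolation route to Palais--Smale is a reasonable variant of the paper's direct H\"older estimate (both ultimately rest on $\beta/(\beta-p)<2^{*}_{s}$, which is exactly the hypothesis $\beta>\frac{2Np}{N+2s}$).

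The genuine gap is your verification of the $(\nabla)$-condition. The sentence ``the quadratic part of $J_{\lambda}'$ dominates the nonlinear perturbation transversally to $V_{i}$ on the relevant sphere'' misdescribes what has to be shown: the $(\nabla)$-condition asks for a \emph{uniform} lower bound on $\|P_{H^{-}\oplus H^{+}}\nabla J_{\lambda}(u)\|$ over \emph{all} $u$ with $\mathrm{dist}(u,H^{-}\oplus H^{+})<\gamma$ and $J_{\lambda}(u)\in[a,b]$, and this set is neither bounded nor a sphere, nor is there any quadratic dominance available there (the quadratic form degenerates precisely in the $V_{i}$ direction). The paper's argument (Lemmas~\ref{lem1s}--\ref{lem3s}) is indirect and substantially more involved: one first shows, using the \emph{first} inequality in $(f4)$ together with Fatou's lemma, that any constrained critical point of $J_{\lambda}$ on $H^{-}\oplus H^{+}$ with energy in $[-\varepsilon_{0},\varepsilon_{0}]$ must be trivial; one then shows that any sequence violating $(\nabla)$ is bounded (here the second inequality in $(f4)$ and the H\"older pairing $\|u\|_{L^{\beta}}^{p}\,\|v\|_{L^{\beta/(\beta-p)}}$ enter); finally compactness forces strong convergence to such a constrained critical point, yielding a contradiction. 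Your sketch contains none of this machinery.

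Two smaller points. First, writing $V_{i}=\mathbb{R}\varphi_{i}$ presumes simplicity of $\alpha_{i}^{s}$; the paper works with the full eigenspace $H_{i}^{0}$. Second, the energy separation required by Theorem~\ref{MS} compares $T\subset H^{-}\oplus V_{i}$ with the sphere $S_{23}(\rho)$ in $V_{i}\oplus H^{+}$ (the paper's $S^{+}_{i-1}(\rho)$), not with $\partial B_{\rho}\cap H^{+}$ as you wrote; the latter sphere—with the index shifted by one—is what enters the separate Linking step (cf.\ $S_{i}^{+}(\rho_{1})$ in the proof of Theorem~\ref{thm2}).
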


\noindent
In order to prove Theorem \ref{thm2}, we apply suitable variational methods after transforming the problem \eqref{P1} into a degenerate elliptic equation with a nonlinear Neumann boundary condition by using the extension technique \cite{BrCDPS, CT, CS, CDDS}. 
Thanks to this approach we are able to overcome the nonlocality of the operator $(-\Delta_{\Omega})^{s}$ and we can use some critical point results to study the extended problem.
More precisely, we show that the functional associated to the extended problem respects the geometry required by the $\nabla$-Theorem introduced by Marino and Saccon in \cite{MS}.
Roughly speaking, this theorem says that if a $C^{1}$-functional $I$ defined on a Hilbert space has a linking structure and $\nabla I$ verifies an appropriate condition on some suitable sets (see Definition 2.1 below), then $I$ has two nontrivial critical points which may have the same critical level. We will apply this abstract result to the functional associated to the extended problem and we will get the existence of two nontrivial solutions. Finally, exploiting an additional linking structure, we get the existence of a third nontrivial solution. 
We recall that in the local setting, similar arguments have been developed and applied in many situations to obtain multiplicity results for several and different problems such as, elliptic problems of second and fourth order, noncooperative elliptic systems, nonlinear Schr\"odinger equations with indefinite linear part in $\R^{N}$, variational inequalities; see \cite{MMS, MS2, OL, w, WZZ}. 
Differently from the classic case, in the nonlocal framework, the only result comparable to Theorem \ref{thm2} is due to Mugnai and Pagliardini \cite{MP} who obtained a multiplicity result to problem \eqref{P1} when $s= \frac{1}{2}$ and $f$ satisfies \eqref{AR}.

\noindent
Our second main result concerns the multiplicity of solutions to \eqref{P}. 
\begin{thm}\label{thm1}
Assume $(f1)$-$(f5)$. Then for any $i\geq 2$ there exists $\delta_{i}>0$ such that for any $\lambda\in (\lambda_{i}-\delta_{i}, \lambda_{i})$, problem \eqref{P} admits at least three nontrivial solutions. Here $\{\lambda_{k}\}_{k\in \N}$ are the eigenvalues of the fractional Laplacian $(-\Delta_{\R^{N}})^{s}$ with homogeneous condition in $\R^{N}\setminus \Omega$.
\end{thm}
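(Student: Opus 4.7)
The plan is to prove Theorem \ref{thm1} by mirroring the argument for Theorem \ref{thm2}, the main structural difference being that now we work directly in the Servadei--Valdinoci space $\X = \{u\in H^{s}(\R^{N}):u=0\text{ a.e. in }\R^{N}\setminus\Omega\}$, endowed with its Gagliardo inner product, rather than via the extension to a half-cylinder. Critical points of the functional
\[
J_{\lambda}(u)=\frac{1}{2}\|u\|_{\X}^{2}-\frac{\lambda}{2}\int_{\Omega}u^{2}\,dx-\int_{\Omega}F(x,u)\,dx
\]
on $\X$ correspond to weak solutions of \eqref{P}, and $(f1)$--$(f2)$ ensure $J_{\lambda}\in C^{1}(\X,\R)$.

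First I would recall the spectral decomposition of $(-\Delta_{\R^{N}})^{s}$ with Dirichlet condition in $\R^{N}\setminus\Omega$, writing $\X=H^{-}\oplus H^{0}\oplus H^{+}$, where $H^{-}$ is spanned by the eigenfunctions corresponding to $\lambda_{1},\dots,\lambda_{i-1}$, $H^{0}$ is the eigenspace of $\lambda_{i}$, and $H^{+}$ is the closure of the span of the remaining eigenfunctions. For $\lambda$ slightly below $\lambda_{i}$ (this is how $\delta_{i}$ is chosen), the quadratic part of $J_{\lambda}$ is negative definite on $H^{-}$, has a small positive coefficient on $H^{0}$, and is positive definite on $H^{+}$. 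Exploiting this splitting together with $(f2)$ (to control $F$ near zero) and $(f3)$, $(f5)$ (to produce uniform superquadratic decay on finite-dimensional subspaces), I would verify the two linking conditions required by the $\nabla$-theorem of Marino--Saccon: a positive lower bound for $J_{\lambda}$ on a small sphere in $H^{+}$, and $J_{\lambda}\to-\infty$ on a large annular region of $H^{-}\oplus H^{0}\oplus(H^{+}\cap B_{R})$, with the crucial strict inequality separating the two linked sets.

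Next I would check the Cerami condition for $J_{\lambda}$ at any level. Given a Cerami sequence $\{u_{n}\}\subset\X$, the combination $J_{\lambda}(u_{n})-\frac{1}{2}\langle J_{\lambda}'(u_{n}),u_{n}\rangle$ gives
\[
\int_{\Omega}\bigl[f(x,u_{n})u_{n}-2F(x,u_{n})\bigr]\,dx=O(1)+o(\|u_{n}\|_{\X}),
\]
and $(f4)$ then yields a uniform bound on $\|u_{n}\|_{L^{\beta}(\{|u_{n}|\geq T\})}$. Interpolating this bound with the fractional Sobolev embedding $\X\hookrightarrow L^{2^{*}_{s}}(\Omega)$, and using the condition $\beta>\frac{2Np}{N+2s}$ precisely to absorb the $L^{p+1}$ growth of $f$ coming from $(f1)$, one concludes that $\{u_{n}\}$ is bounded in $\X$. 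Strong convergence is then standard: the compact embedding $\X\hookrightarrow L^{q}(\Omega)$ for $q\in[1,2^{*}_{s})$ handles the nonlinear term, and the remaining quadratic form is elliptic on $H^{+}$ and finite-dimensional on $H^{-}\oplus H^{0}$.

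With geometry and compactness in place, the $\nabla$-theorem furnishes two nontrivial critical points of $J_{\lambda}$ (possibly at the same level). A third one comes from an additional linking structure: I would link the full space $H^{-}\oplus H^{0}$ with a suitable half-sphere in $H^{+}$ (as in the Linking Theorem) to produce a mountain-pass-type critical level which, thanks to the sharp estimate of the $\nabla$-level and the strict inequality in $(f4)$, is genuinely distinct from the two already found. The main obstacle I anticipate is the Cerami step: without the Ambrosetti--Rabinowitz condition the standard argument breaks down, and one must handle the nonlocal $\X$-norm together with a delicate interplay between $(f4)$, the exponent restriction $\beta>\frac{2Np}{N+2s}$, and fractional Sobolev interpolation to obtain boundedness of Cerami sequences.
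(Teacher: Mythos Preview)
Your overall strategy---work directly in $\X$, apply the Marino--Saccon $\nabla$-theorem for two solutions, then the Linking Theorem for a third---matches the paper. However, there is a genuine gap: you treat the $\nabla$-theorem as if it required only a linking geometry plus a compactness condition, but its decisive hypothesis is the $(\nabla)$-condition itself, namely that
\[
\inf\bigl\{\|P_{H^{-}\oplus H^{+}}\nabla J_{\lambda}(u)\|: a\le J_{\lambda}(u)\le b,\ \mathrm{dist}(u,H^{-}\oplus H^{+})<\gamma\bigr\}>0
\]
for suitable $0<a<b$ and some $\gamma>0$. You never address this. In the paper this is the content of Lemmas~\ref{lem1}--\ref{lem3} and is the heart of the argument: one shows that any constrained critical point of $J_{\lambda}$ on $H^{-}\oplus H^{+}$ with energy in $[-\varepsilon_{0},\varepsilon_{0}]$ must be trivial, using $(f4)$, the compactness of $((-\Delta_{\R^{N}})^{s})^{-1}$, and the spectral gap at $\lambda_{i}$; then one upgrades this to the $(\nabla)$-condition via a boundedness lemma for sequences with $Pu_{n}\to 0$ and $Q\nabla J_{\lambda}(u_{n})\to 0$. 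Without this verification the $\nabla$-theorem cannot be invoked.

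Your description of the linking geometry is also off. With $X_{1}=H^{-}$, $X_{2}=H^{0}$, $X_{3}=H^{+}$, the sphere $S_{23}(\varrho)$ lies in $H^{0}\oplus H^{+}$ (not in $H^{+}$ alone), and the set $\Gamma$ whose boundary must satisfy $\sup J_{\lambda}<\inf J_{\lambda}(S_{23})$ lies entirely in $H^{-}\oplus H^{0}$, with no $H^{+}$ component. The lower bound on the sphere uses precisely that $\lambda<\lambda_{i}$, so the quadratic form is positive on all of $H^{0}\oplus H^{+}$. The paper also needs an extra smallness estimate, $\sup J_{\lambda}(B_{i}(R))<\varepsilon$ for $\lambda$ close to $\lambda_{i}$ (Lemma~\ref{lem5}), in order to choose $b$ below the threshold $\varepsilon_{0}$ produced by the $(\nabla)$-condition; this is where $\delta_{i}$ is actually pinned down. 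Finally, the paper verifies the Palais--Smale (not Cerami) condition via the decomposition $u_{n}=Pu_{n}+v_{n}+w_{n}\in H^{0}\oplus H^{-}\oplus H^{+}$ and shows each summand is $o(\|u_{n}\|_{\X})$; your interpolation sketch is plausible but organized differently.
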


\noindent
The proof of the above result is obtained following the approach developed to prove Theorem \ref{thm2}. 
Anyway, we do not make use of any extension method and our techniques work also when we replace $(-\Delta_{\R^{N}})^{s}$ by the more general integro-differential operator $-\mathcal{L}_{K}$ defined up to a positive constant as
$$
\mathcal{L}_{\mathcal{K}} u(x)=\int_{\R^{N}}  (u(x+y)+u(x-y)-2u(x)) \mathcal{K}(y) dy \quad (x\in \R^{N}),
$$ 
where $\mathcal{K}:\R^{N}\setminus\{0\}\rightarrow (0, \infty)$ is a measurable function such that $\mathcal{K}(-x)=\mathcal{K}(x)$ for all $x\in \R^{N}\setminus\{0\}$, $m \mathcal{K}\in L^{1}(\R^{N})$ with $m(x)=\min\{|x|^{2}, 1\}$, and there exists $\theta>0$ such that $\mathcal{K}(x)\geq \theta |x|^{-(N+2s)}$ for all $x\in \R^{N}\setminus\{0\}$.

In this context, we take care of the well-known results on the spectrum of integro-differential operators obtained by Servadei and Valdinoci in \cite{sv2, sv3}. We point out that in a recent paper Molica Bisci et al. \cite{MBMS} proved a similar result to Theorem \ref{thm1} when $f$ verifies condition \eqref{AR}, obtaining a nonlocal counterpart of the multiplicity result established in \cite{M}. 

The paper is organized as follows. In Section $2$ we recall some useful results related to the extension method in a bounded domain and then we provide some useful lemmas which will be fundamental to apply a critical point theorem of mixed nature. In Section $3$ we deal with the existence of three nontrivial weak solutions to the problem \eqref{P}.

\section{multiplicity for the problem \eqref{P1}}
\subsection{Extended problem in the half-cylinder}
In order to study problem \eqref{P1}, we use a suitable variant of the extension technique due to Caffarelli and Silvestre \cite{CS}; see \cite{BrCDPS, CT, CDDS} for more details. Firstly, we collect some useful notations and basic results which will be useful along the paper. \\
Fix $s\in (0, 1)$.
We say that $u\in H^{s}(\Omega)$ if $u\in L^{2}(\Omega)$ and it holds
$$
[u]_{H^{s}(\Omega)}^{2}=\iint_{\Omega\times \Omega} \frac{|u(x)-u(y)|^{2}}{|x-y|^{N+2s}} \, dx dy<\infty.
$$
We define $H^{s}_{0}(\Omega)$ as the closure of $C^{\infty}_{c}(\Omega)$ with respect to the norm $[u]_{H^{s}(\Omega)}^{2}+\|u\|_{L^{2}(\Omega)}^{2}$. The space $H^{\frac{1}{2}}_{00}(\Omega)$ is the Lions-Magenes   space  \cite{LM}  which consists of the function $u\in H^{\frac{1}{2}}(\Omega)$ such that
$$
\int_{\Omega} \frac{u^{2}(x)}{{\rm dist}(x, \partial \Omega)}\, dx<\infty.
$$
Let us introduce the Hilbert space
$$
\A=\Bigl\{u\in L^{2}(\Omega): \sum_{k=1}^{\infty} |c_{k}|^{2}\alpha_{k}^{s}<\infty\Bigr\}.
$$
It is well known \cite{LM} that interpolation leads to 
\begin{equation*}
\A=
\left\{
\begin{array}{ll}
H^{s}(\Omega) &\mbox{ if } s\in (0, \frac{1}{2})\\
H^{\frac{1}{2}}_{00}(\Omega) &\mbox{ if } s=\frac{1}{2} \\
H^{s}_{0}(\Omega) &\mbox{ if } s\in (\frac{1}{2}, 1).
\end{array}
\right.
\end{equation*}
Let us define the cylinder $\C=\Omega\times (0, +\infty)$ and its lateral boundary $\partial_{L}\C=\partial \Omega\times [0, +\infty)$.
Let us denote by  $H^{1}_{0, L}(y^{1-2s})$ the space of measurable functions $v:\C\rightarrow \R$ such that $v\in H^{1}(\Omega\times (\alpha, \beta))$ for all $0<\alpha<\beta<+\infty$, $u=0$ on  $\partial_{L}\C$ and for which the following norm is finite
$$
\|u\|_{\Y}^{2}=\iint_{\C} y^{1-2s} |\nabla u|^{2}\, dx dy.
$$
We recall the following trace theorem which relates $\Y$ to $\A$.
\begin{thm}\cite{BrCDPS, CT, CDDS}\label{tracethm}
There exists a surjective continuous linear map $\textup{Tr}: \Y\rightarrow \A$ such that, for any $u\in \Y$
$$
\kappa_{s} \|\textup{Tr}(u)\|^{2}_{\A}\leq \|u\|^{2}_{\Y}.
$$
\end{thm}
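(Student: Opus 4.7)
The plan is to exploit the spectral structure of both spaces by decomposing a function on the cylinder $\C$ with respect to the eigenbasis $\{\varphi_{k}\}$ of $-\Delta$ in $\Omega$, thereby reducing the two-dimensional weighted problem to an infinite family of one-dimensional weighted problems on $(0, +\infty)$ whose constants can be computed explicitly.

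First I would fix a smooth test function $v \in C^{\infty}(\bar{\C})$ vanishing on $\partial_{L}\C$, write $v(\cdot, y) = \sum_{k} c_{k}(y)\,\varphi_{k}$ for each $y \geq 0$, and use Fubini together with the orthonormality of $\{\varphi_{k}\}$ in $L^{2}(\Omega)$ to obtain
\begin{equation*}
\|v\|_{\Y}^{2} = \sum_{k=1}^{\infty} \int_{0}^{+\infty} y^{1-2s} \bigl( |c_{k}'(y)|^{2} + \alpha_{k}\, |c_{k}(y)|^{2} \bigr)\, dy.
\end{equation*}
For each fixed $k$, minimizing the scalar functional $J_{k}(w) = \int_{0}^{+\infty} y^{1-2s}(|w'|^{2} + \alpha_{k}|w|^{2})\, dy$ over $w$ with prescribed $w(0) = c_{k}(0)$ is a classical weighted one-dimensional problem: its minimizer is unique and, after the rescaling $\tau = \sqrt{\alpha_{k}}\, y$, reduces to a universal profile $\theta$ solving $(\tau^{1-2s}\theta')' = \tau^{1-2s}\theta$ on $(0, +\infty)$ with $\theta(0)=1$ and $\theta(\infty)=0$. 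A direct computation yields
\begin{equation*}
\inf_{w(0)=c_{k}(0)} J_{k}(w) = \kappa_{s}\, \alpha_{k}^{s}\, |c_{k}(0)|^{2},
\end{equation*}
with a universal constant $\kappa_{s} > 0$ depending only on $s$. Summing in $k$ gives $\kappa_{s} \|\textup{Tr}(v)\|_{\A}^{2} \leq \|v\|_{\Y}^{2}$ for smooth $v$, and the trace then extends to all of $\Y$ by continuity and density.

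For surjectivity, given $u = \sum_{k} c_{k}\,\varphi_{k} \in \A$ I would define the extension
\begin{equation*}
v(x, y) := \sum_{k=1}^{\infty} c_{k}\, \varphi_{k}(x)\, \theta(\sqrt{\alpha_{k}}\, y),
\end{equation*}
using the extremal profile $\theta$ above; the identity from the previous step becomes an equality for this $v$, delivering at once $v \in \Y$, $\textup{Tr}(v) = u$, and $\|v\|_{\Y}^{2} = \kappa_{s}\, \|u\|_{\A}^{2}$.

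The main obstacle I expect is the justification of the eigenfunction expansion and the interchange of integration and summation at the level of $\Y$ rather than only for smooth $v$: the weight $y^{1-2s}$ belongs to the Muckenhoupt class $A_{2}$, so one must work within the correct weighted Sobolev framework to make sense of the pointwise trace at $y=0$ (where the weight degenerates or blows up according to whether $s < \tfrac{1}{2}$ or $s > \tfrac{1}{2}$) and to ensure that the series defining the harmonic extension converges in the $\Y$-norm. These points are standard in the Caffarelli-Silvestre theory adapted to bounded domains and can be handled as in \cite{CT, CDDS, BCdPS}.
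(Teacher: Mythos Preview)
The paper does not prove this theorem at all: it is stated with a citation to \cite{BCdPS, CT, CDDS} and used as a black box. Your proposal is the standard proof from those references---spectral decomposition in the $\varphi_{k}$'s, reduction to the one-dimensional weighted minimization with extremal profile $\theta$, and the explicit extension $v(x,y)=\sum_{k}c_{k}\varphi_{k}(x)\theta(\sqrt{\alpha_{k}}\,y)$ for surjectivity---and in fact the paper itself recalls exactly this extension formula and the identity $\|v\|_{\Y}^{2}=\kappa_{s}\|u\|_{\A}^{2}$ in the paragraph immediately following the cited theorems, so your outline aligns with both the sources and the paper's own summary.
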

\noindent
We also have some useful embedding results.
\begin{thm}\cite{BrCDPS, CT, CDDS}\label{SSembedding}
Let $N> 2s$ and $q\in [1, 2^{*}_{s}]$. Then there exists a constant $C$ depending on $N$, $q$ and the measure of $\Omega$, such that, for all $u\in \X$
$$
\|\textup{Tr}(u)\|_{L^{q}(\Omega)}\leq C \|u\|_{\Y}.
$$
Moreover, $\Y$ is compactly embedded into $L^{q}(\Omega)$ for any $q\in [1, 2^{*}_{s})$.
\end{thm}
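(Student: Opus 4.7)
The plan is to establish the embedding by composing two known ingredients: the trace theorem from the weighted Sobolev space on the cylinder, and the classical fractional Sobolev embedding on the bounded domain $\Omega$. Theorem \ref{tracethm} already provides a continuous trace operator $\textup{Tr}:\Y\to\A$, so the only substantive piece left is to embed $\A$ into $L^{q}(\Omega)$.

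For the continuous embedding, I would invoke the interpolation identification of $\A$ recalled just before Theorem \ref{tracethm}: $\A=H^{s}(\Omega)$ when $s\in(0,1/2)$, $\A=H^{1/2}_{00}(\Omega)$ when $s=1/2$, and $\A=H^{s}_{0}(\Omega)$ when $s\in(1/2,1)$. In every case $\A$ embeds continuously into $H^{s}(\Omega)$, and the fractional Sobolev inequality on a bounded smooth domain (see for instance \cite{DPV}) furnishes a continuous inclusion $H^{s}(\Omega)\hookrightarrow L^{q}(\Omega)$ for every $q\in[1,2^{*}_{s}]$, provided $N>2s$. Chaining these maps with the trace estimate gives
$$
\|\textup{Tr}(u)\|_{L^{q}(\Omega)} \leq C\,\|\textup{Tr}(u)\|_{\A} \leq C\,\|u\|_{\Y},
$$
which is the first assertion of the theorem.

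For the compactness in the subcritical range $q\in[1,2^{*}_{s})$, I would appeal to the fractional Rellich-Kondrachov theorem: on a bounded smooth domain the embedding $H^{s}(\Omega)\hookrightarrow L^{q}(\Omega)$ is compact for $q<2^{*}_{s}$. Given a sequence $\{u_{n}\}$ bounded in $\Y$, Theorem \ref{tracethm} yields that $\{\textup{Tr}(u_{n})\}$ is bounded in $\A$ and hence in $H^{s}(\Omega)$; up to a subsequence, it converges strongly in $L^{q}(\Omega)$ by the compact fractional embedding. This is precisely the claimed compactness.

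The main obstacle is the delicate case $s=1/2$, where $\A=H^{1/2}_{00}(\Omega)$ is strictly smaller than $H^{1/2}(\Omega)$ owing to the weighted integrability condition near $\partial\Omega$. The saving observation is the continuous inclusion $H^{1/2}_{00}(\Omega)\hookrightarrow H^{1/2}(\Omega)$, which transfers both the Sobolev inequality and the Rellich-Kondrachov compactness of the ambient space down to $\A$. A secondary delicate point is that $\Y$ is defined on the unbounded cylinder $\C$ via a pure Dirichlet seminorm, so one must verify that the trace on $\Omega\times\{0\}$ is well-defined even in the absence of a zeroth-order term; this, however, is exactly the content of Theorem \ref{tracethm}, and integration by parts in the $y$-direction against a cutoff decaying at infinity is the standard way to make it rigorous.
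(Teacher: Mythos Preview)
The paper does not give its own proof of this theorem: it is stated as a preliminary result and attributed to \cite{BCdPS, CT, CDDS} without argument. So there is no proof in the paper to compare your proposal against.

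That said, your sketch is essentially the standard route one finds in those references: factor the embedding through the trace map of Theorem~\ref{tracethm} and then invoke the fractional Sobolev embedding $\A\hookrightarrow L^{q}(\Omega)$, with compactness in the subcritical range coming from the fractional Rellich--Kondrachov theorem. Your treatment of the three regimes $s<1/2$, $s=1/2$, $s>1/2$ via the interpolation identification of $\A$ is appropriate, and the observation that $H^{1/2}_{00}(\Omega)\hookrightarrow H^{1/2}(\Omega)$ continuously handles the borderline case correctly. One small point: in the references the critical embedding $q=2^{*}_{s}$ is typically obtained not from the domain embedding $H^{s}(\Omega)\hookrightarrow L^{2^{*}_{s}}(\Omega)$ directly but from the full-space trace inequality (extending by zero and using the $H^{s}(\R^{N})$ Sobolev inequality), which is cleaner because the constant then depends only on $N$ and $s$; your version routes through $H^{s}(\Omega)$ and picks up a dependence on $|\Omega|$, which is harmless here and in fact matches what the statement claims about $C$.
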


\noindent
Thus, we get the following fundamental result which allows us to realize the fractional spectral Laplacian $(-\Delta_{\Omega})^{s}$.
\begin{thm}\cite{BrCDPS, CT, CDDS}
Let $u\in \A$. Then there exists a unique $v\in \Y$ such that 
\begin{equation*}
\left\{
\begin{array}{ll}
-\dive(y^{1-2s} v)= 0 &\mbox{ in } \C \\
v=0 &\mbox{ on } \partial_{L} \C\\
\textup{Tr}(v)=u &\mbox{ on } \partial^{0} \C.
\end{array}
\right.
\end{equation*}
Moreover
$$
\frac{\partial v}{\partial \nu^{1-2s}}:=-\lim_{y\rightarrow 0^{+}} y^{1-2s} v_{y}(x, y)=\kappa_{s}(-\Delta_{\Omega})^{s} u(x) \mbox{ in } \A^{*},  
$$
where $\A^{*}$ is the dual of $\A$. The function $v$ is called the extension of $u$.
\end{thm}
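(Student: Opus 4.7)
The plan is to split the argument into two essentially independent parts: existence and uniqueness of the harmonic extension $v$, and identification of the weighted co-normal derivative with the spectral operator.

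For existence and uniqueness, I would use the direct method in the calculus of variations. By the surjectivity of the trace in the preceding trace theorem, the affine set $V_u=\{w\in\Y:\textup{Tr}(w)=u\}$ is non-empty, closed and convex in $\Y$. The functional $J(w)=\tfrac12\|w\|_{\Y}^{2}$ is strictly convex, coercive and weakly lower semicontinuous, hence it admits a unique minimizer $v\in V_u$. Testing $J$ against admissible variations $w\in\Y$ with $\textup{Tr}(w)=0$ yields the Euler--Lagrange identity
$$
\iint_{\C} y^{1-2s}\,\nabla v\cdot\nabla w\,dx\,dy=0,
$$
which is precisely the weak form of $-\dive(y^{1-2s}\nabla v)=0$ in $\C$; the Dirichlet condition on $\partial_{L}\C$ is built into $\Y$, and $\textup{Tr}(v)=u$ on $\partial^{0}\C$ is the constraint.

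To identify the normal derivative, I would construct $v$ explicitly by separation of variables and then invoke the uniqueness above. Writing $u=\sum_{k\geq 1}c_k\varphi_k$ in $\A$, I would look for $v(x,y)=\sum_{k\geq 1}c_k\varphi_k(x)h_k(y)$, so that the PDE reduces, for each mode, to the Bessel-type ODE
$$
h_k''(y)+\frac{1-2s}{y}h_k'(y)-\alpha_k h_k(y)=0,\qquad h_k(0)=1,\quad \lim_{y\to\infty}h_k(y)=0.
$$
The rescaling $h_k(y)=\psi(\sqrt{\alpha_k}\,y)$ reduces the family to the single profile $\psi(t)=\frac{2^{1-s}}{\Gamma(s)}t^{s}K_s(t)$, where $K_s$ is the modified Bessel function of the second kind. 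Convergence of the series in $\Y$ follows from a Parseval-type identity together with the defining summability $\sum|c_k|^{2}\alpha_k^{s}<\infty$ of $\A$, and by uniqueness this $v$ coincides with the variational minimizer.

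With the explicit form in hand, the small-$t$ asymptotics $K_s(t)\sim \tfrac{\Gamma(s)}{2}(t/2)^{-s}$ gives $-\lim_{y\to 0^{+}}y^{1-2s}h_k'(y)=\kappa_s\alpha_k^{s}$ with the explicit constant $\kappa_s=\frac{2^{1-2s}\Gamma(1-s)}{\Gamma(s)}$. Summing in $k$ yields $-\lim_{y\to 0^{+}}y^{1-2s}v_y=\kappa_s(-\Delta_{\Omega})^{s}u$, at least formally. To give this a rigorous meaning in $\A^{*}$, I would test against $\phi\in\A$ with harmonic extension $\Phi\in\Y$: integrating by parts on $\C$ and using $-\dive(y^{1-2s}\nabla v)=0$ pointwise in the interior, the bulk term drops out and only the boundary contribution on $\partial^{0}\C$ survives, producing the stated duality identity. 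The main obstacle I foresee is the careful handling of the weighted limit at $y=0^{+}$ and of the integration by parts in the $A_{2}$-Muckenhoupt setting: one must justify term-by-term differentiation of the series for $v_y$ near the base and absolute convergence of the boundary pairing, after which everything reduces to the Bessel asymptotics that pin down $\kappa_s$.
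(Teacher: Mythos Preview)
The paper does not supply a proof of this theorem: it is quoted from \cite{BCdPS, CT, CDDS}, and immediately afterwards the paper simply records the explicit series representation $v(x,y)=\sum_k c_k\varphi_k(x)\theta(\sqrt{\lambda_k}\,y)$ together with the ODE satisfied by $\theta$ and the energy identity $\|v\|_{\Y}^{2}=\kappa_s\|u\|_{\A}^{2}$, referring to \cite{BrCDPS, CT, CDDS} for details. So there is no ``paper's own proof'' to compare against.

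That said, your proposal is essentially the argument carried out in those cited references, and it matches the paper's subsequent remarks: the variational minimization gives existence and uniqueness cleanly, and the separation-of-variables construction with the Bessel profile $\psi(t)=c\,t^sK_s(t)$ is exactly the function the paper calls $\theta$. One small point: the paper writes the scaling as $\theta(\sqrt{\lambda_k}\,y)$ (using the Dirichlet eigenvalues $\lambda_k$, which here coincide with your $\alpha_k$), so your $h_k(y)=\psi(\sqrt{\alpha_k}\,y)$ is the same object. Your identification of $\kappa_s$ via the small-$t$ asymptotics of $K_s$ and the duality argument by integration by parts against the harmonic extension of a test function $\phi\in\A$ is the standard route; the Parseval-type control you mention is precisely what justifies the term-by-term passage to the limit and yields the energy identity recorded in the paper. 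In short, your sketch is correct and aligned with the literature the paper is citing; nothing is missing beyond routine bookkeeping in the $A_2$-weighted integration by parts.
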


\noindent 
We also recall that if $u=\sum_{k=1}^{\infty} c_{k} \varphi_{k}\in \A$, then the extension of $u$ is given by 
$$
v(x, y)=\sum_{k=1}^{\infty} c_{k} \varphi_{k}(x) \theta(\sqrt{\lambda_{k}} y),
$$
where $\theta\in H^{1}(\R_{+}, y^{1-2s})$ solves the problem 
\begin{equation*}
\left\{
\begin{array}{ll}
\theta''+\frac{1-2s}{y}\theta'-\theta= 0 &\mbox{ in } \R_{+}\\
\theta(0)=1, \quad \mbox{ and } -\lim_{y\rightarrow 0^{+}} y^{1-2s}\theta'(y)=\kappa_{s}.
\end{array}
\right.
\end{equation*}
In addition, $\|v\|_{\Y}^{2}=\kappa_{s}\|u\|^{2}_{\A}$; see \cite{BrCDPS, CT, CDDS} for more details. 
\begin{remark}
In order to simplify notation, when no confusion arises, we shall denote by $v$ the function defined in the cylinder $\C$ as well as its trace $\textup{Tr}(v)$ on $\Omega\times\{y=0\}$.
\end{remark} 
Taking into account the above results, we can deduce that the study of \eqref{P1} is equivalent to consider the following degenerate elliptic problem with nonlinear Neumann boundary condition
\begin{equation}\label{P3}
\left\{
\begin{array}{ll}
\dive(y^{1-2s}\nabla u)=0 &\mbox{ in } \C\\
u=0 &\mbox{ on } \partial_{L} \C \\
\frac{\partial u}{\partial \nu^{1-2s}}=\kappa_{s}[\lambda u+f(x, u)] &\mbox{ on } \partial \Omega\times \{0\}.
\end{array}
\right.
\end{equation}
For simplicity, in what follows, we will assume that $\kappa_{s}=1$.

\subsection{Technical lemmas and $\nabla$-condition}

For $i\geq 2$, let us introduce the following notations. 
Let $H_{i}={\rm span} \{\psi_{1}, \dots, \psi_{i}\}$,  
where $\psi_{k}(x, y)=\varphi_{k}(x)\theta(\sqrt{\lambda_{k}}y)$, 
$$
H_{i}^{\perp}=\{u\in \Y: \langle u, \psi_{j}\rangle_{\Y}=0, \mbox{ for all } j=1, \dots, i\}
$$ 
and $H_{i}^{0}=\{\psi_{i}, \dots, \psi_{j} \}$.
We set $\mu_{i}=\alpha_{i}^{s}$.
Since $\{\alpha_{k}\}_{k\in \N}$ is increasing, a direct calculation yields the next result.
\begin{lem}\label{Slem}
For any $i\geq 1$, the following inequalities hold
\begin{equation}\label{7s}
\|u\|_{\Y}^{2}\leq \mu_{i} \|u\|_{L^{2}(\Omega)}^{2} \mbox{ for any } u\in H_{i}
\end{equation}
and
\begin{equation}\label{8s}
\|u\|_{\Y}^{2}\geq \mu_{i+1} \|u\|_{L^{2}(\Omega)}^{2} \mbox{ for any } u\in H_{i}^{\perp}.
\end{equation}
\end{lem}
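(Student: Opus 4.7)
The plan is to exploit that each $\psi_k$ is the $s$-harmonic extension of the Dirichlet eigenfunction $\varphi_k$, hence an eigenfunction of the extended Neumann problem with eigenvalue $\mu_k$. First I would fix the $L^{2}(\Omega)$-orthonormal normalization of $\{\varphi_k\}$ and observe, via the identity $\|v\|_{\Y}^{2}=\kappa_{s}\|u\|^{2}_{\A}$ (with $\kappa_s=1$) together with $\|\varphi_k\|^{2}_{\A}=\mu_k$, that $\|\psi_k\|_{\Y}^{2}=\mu_k$. The key computation to set up is integration by parts: since $\dive(y^{1-2s}\nabla \psi_j)=0$ in $\C$, $\psi_j=0$ on $\partial_{L}\C$, and $\partial\psi_j/\partial\nu^{1-2s}=\mu_j\varphi_j$ on $\Omega\times\{0\}$, for every $v\in\Y$ one expects
\begin{equation*}
\langle v,\psi_j\rangle_{\Y}=\mu_j\int_{\Omega}\varphi_j\,\textup{Tr}(v)\,dx.
\end{equation*}
Choosing $v=\psi_k$ and invoking $L^{2}$-orthonormality yields $\langle\psi_k,\psi_j\rangle_{\Y}=\mu_j\delta_{jk}$, i.e.\ the family $\{\psi_k\}$ is $\Y$-orthogonal.

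For \eqref{7s} the strategy is a direct expansion: writing $u=\sum_{k=1}^{i} c_k\psi_k\in H_i$, the orthogonality just established gives $\|u\|_{\Y}^{2}=\sum_{k=1}^{i} c_k^{2}\mu_k$ and $\|u\|_{L^{2}(\Omega)}^{2}=\sum_{k=1}^{i} c_k^{2}$, and the monotonicity $\mu_k\leq \mu_i$ for $k\leq i$ delivers the bound. For \eqref{8s} the plan is first to translate $\Y$-orthogonality into $L^{2}$-orthogonality of the trace: applying the displayed identity with a test element $u\in H_i^{\perp}$ and $j=1,\dots,i$ forces $\textup{Tr}(u)$ to be $L^{2}$-orthogonal to $\varphi_1,\dots,\varphi_i$. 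Expanding $\textup{Tr}(u)=\sum_{k\geq i+1} d_k\varphi_k$ and using the spectral definition of $\A$ with $\mu_k\geq\mu_{i+1}$ for $k\geq i+1$ gives $\|\textup{Tr}(u)\|_{\A}^{2}\geq\mu_{i+1}\|\textup{Tr}(u)\|_{L^{2}(\Omega)}^{2}$; combining with the trace inequality $\|\textup{Tr}(u)\|_{\A}^{2}\leq\|u\|_{\Y}^{2}$ from Theorem \ref{tracethm} closes the argument.

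The only delicate point is the integration by parts underlying the displayed identity: one must verify that the boundary contribution at $y\to +\infty$ vanishes, which follows from the exponential decay of $\theta(\sqrt{\alpha_j}\,y)$ encoded in the ODE for $\theta$, and that the weighted normal derivative $\partial\psi_j/\partial\nu^{1-2s}$ pairs correctly with $\textup{Tr}(v)\in\A$ in the duality between $\A$ and $\A^{*}$. Once these facts, standard in the extension framework, are in place, both inequalities reduce to the Parseval-type computation above.
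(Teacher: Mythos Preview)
Your argument is correct and fleshes out exactly the ``direct calculation'' the paper leaves to the reader: the paper's entire proof consists of the single sentence that the inequalities follow by direct computation from the monotonicity of $\{\lambda_k\}$. Your route---establishing the $\Y$-orthogonality $\langle\psi_k,\psi_j\rangle_{\Y}=\mu_j\delta_{jk}$ via the weak formulation of the extension problem, expanding in $H_i$ for \eqref{7s}, and for \eqref{8s} translating $\Y$-orthogonality into $L^2(\Omega)$-orthogonality of the trace and invoking Theorem~\ref{tracethm}---is precisely how one carries out that computation.
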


\noindent
Now, we prove an auxiliary lemma.
\begin{lem}\label{clem}
Let $K: L^{2}(\Omega)\rightarrow \Y$ be the operator defined by setting $K(u)=v$, where $v$ is the unique solution to 
\begin{equation}\label{PK}
\left\{
\begin{array}{ll}
-\dive(y^{1-2s} v)= 0 &\mbox{ in } \C \\
v=0 &\mbox{ on } \partial_{L} \C\\
\frac{\partial v}{\partial \nu^{1-2s}}=u &\mbox{ on } \partial^{0} \C.
\end{array}
\right.
\end{equation}
Then $K$ is compact.
\end{lem}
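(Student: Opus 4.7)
The plan is to realize $K$ via the variational characterization of the solution to \eqref{PK}, combine boundedness of $K$ coming from the continuous trace embedding with the compactness of the trace embedding from $\mathbb{Y}$ into $L^{2}(\Omega)$ (Theorem \ref{SSembedding}, applicable since $2\in [1,2^{*}_{s})$), and then pass to subsequences to upgrade weak convergence in $\mathbb{Y}$ to strong convergence.

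More precisely, I would first observe that for every $u\in L^{2}(\Omega)$, the Lax--Milgram lemma applied to the bilinear form $a(v,\varphi)=\iint_{\mathcal{C}}y^{1-2s}\nabla v\cdot\nabla\varphi\,dx\,dy$ on $\mathbb{Y}$ (coercive and continuous) and to the linear functional $\varphi\mapsto \int_{\Omega}u\,\textup{Tr}(\varphi)\,dx$ (continuous by Theorem \ref{SSembedding}) provides a unique $v=K(u)\in\mathbb{Y}$ solving \eqref{PK} in the weak sense
$$
\iint_{\mathcal{C}} y^{1-2s}\nabla v\cdot\nabla\varphi\,dx\,dy=\int_{\Omega}u\,\textup{Tr}(\varphi)\,dx\qquad\forall\,\varphi\in\mathbb{Y}.
$$
Choosing $\varphi=v$ and applying Cauchy--Schwarz together with Theorem \ref{SSembedding} yields
$$
\|K(u)\|_{\mathbb{Y}}^{2}\leq \|u\|_{L^{2}(\Omega)}\|\textup{Tr}(v)\|_{L^{2}(\Omega)}\leq C\|u\|_{L^{2}(\Omega)}\|K(u)\|_{\mathbb{Y}},
$$
so $K:L^{2}(\Omega)\to\mathbb{Y}$ is linear and bounded with $\|K(u)\|_{\mathbb{Y}}\leq C\|u\|_{L^{2}(\Omega)}$.

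For compactness, I would take a sequence $\{u_{n}\}\subset L^{2}(\Omega)$ with $\|u_{n}\|_{L^{2}(\Omega)}\leq M$ and set $v_{n}=K(u_{n})$. By the previous bound, $\{v_{n}\}$ is bounded in $\mathbb{Y}$, so, up to a subsequence, $v_{n}\rightharpoonup v$ weakly in $\mathbb{Y}$. By the compact embedding part of Theorem \ref{SSembedding}, up to a further subsequence, $\textup{Tr}(v_{n})\to \textup{Tr}(v)$ strongly in $L^{2}(\Omega)$. Simultaneously, reflexivity of $L^{2}(\Omega)$ gives a subsequence with $u_{n}\rightharpoonup u$ weakly in $L^{2}(\Omega)$; the subsequential weak limit uniquely identifies $K(u)=v$ because one can pass to the limit in the weak formulation (the left-hand side by weak $\mathbb{Y}$-convergence of $v_{n}$ and the right-hand side by weak $L^{2}$-convergence of $u_{n}$ paired with the fixed test $\varphi$, whose trace lies in $L^{2}(\Omega)$).

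To obtain strong convergence in $\mathbb{Y}$, I would test the weak formulation for $v_{n}-v_{m}$ against itself, getting
$$
\|v_{n}-v_{m}\|_{\mathbb{Y}}^{2}=\int_{\Omega}(u_{n}-u_{m})\,\textup{Tr}(v_{n}-v_{m})\,dx\leq 2M\,\|\textup{Tr}(v_{n})-\textup{Tr}(v_{m})\|_{L^{2}(\Omega)},
$$
which tends to $0$ as $n,m\to\infty$ because $\{\textup{Tr}(v_{n})\}$ is Cauchy in $L^{2}(\Omega)$. Hence $\{v_{n}\}$ is Cauchy in $\mathbb{Y}$ and converges strongly to $v=K(u)$, proving that $K$ is compact. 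The only subtle point is extracting a single subsequence along which both the weak $\mathbb{Y}$-limit and the strong $L^{2}$-trace limit are available, so that the trace-convergence step in the Cauchy estimate is legitimate; this is handled by chaining the two extractions supplied by Theorem \ref{SSembedding}.
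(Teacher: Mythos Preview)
Your proof is correct and follows essentially the same route as the paper: bound $v_{n}=K(u_{n})$ in $\mathbb{Y}$ via the weak formulation and Theorem~\ref{SSembedding}, extract a weakly convergent subsequence, and then exploit the compact trace embedding together with the variational identity to upgrade to strong convergence in $\mathbb{Y}$. The only cosmetic difference is the last step: the paper tests with $\varphi=v_{n}-v$ to obtain $\|v_{n}\|_{\mathbb{Y}}\to\|v\|_{\mathbb{Y}}$ and then invokes the Radon--Riesz property of Hilbert spaces, whereas you test with $\varphi=v_{n}-v_{m}$ and run a Cauchy argument; your extraction of a weak $L^{2}$ limit for $u_{n}$ and the identification $v=K(u)$ are not needed for compactness, but they do no harm.
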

\begin{proof}
Let $\{u_{n}\}_{n\in \N}$ be a bounded sequence in $L^{2}(\Omega)$. From the weak formulation of \eqref{PK} and Theorem \ref{SSembedding}, we can see that
$$
\|v_{n}\|_{\Y}^{2}\leq \|u_{n}\|_{L^{2}(\Omega)}\|v_{n}\|_{L^{2}(\Omega)}\leq C \|u_{n}\|_{L^{2}(\Omega)} \|v_{n}\|_{\Y}
$$ 
that is, $\{v_{n}\}_{n\in \N}$ is bounded in $\Y$. Then, in view of Theorem \ref{SSembedding}, we may assume that 
\begin{align}\begin{split}\label{SCS}
&v_{n}\rightharpoonup v \mbox{ in } \Y \\
&v_{n}\rightarrow v \mbox{ in } L^{q}(\Omega) \quad \forall q\in [1, 2^{*}_{s}).
\end{split}\end{align}
Now, by using \eqref{PK}, we can see that for any $n\in \N$
\begin{equation}\label{waffle}
\|v_{n}\|_{\Y}^{2}-\iint_{\C} y^{1-2s} \nabla v_{n}\nabla v\, dx dy=\int_{\Omega} u_{n}(v_{n}-v)\, dx.
\end{equation}
Taking into account \eqref{SCS} and the fact that $\{u_{n}\}_{n\in \N}$ is bounded in $L^{2}(\Omega)$, from \eqref{waffle} we can deduce that $\|v_{n}\|_{\Y}\rightarrow \|v\|_{\Y}$. Since $\Y$ is a Hilbert space, we can conclude the proof.
\end{proof}

\noindent
In order to deduce a multiplicity result for \eqref{P3} we need to recall the $\nabla$-Theorem due to Marino and Saccon \cite{MS}.
We begin giving the following definition.
\begin{defn}
Let $X$ be a Hilbert space, $I\in C^{1}(X, \R)$ and $M$ a closed subspace of $X$, $a, b\in \R\cup \{-\infty, +\infty\}$. We say that the condition $(\nabla) (I, M, a, b)$ holds if there is $\gamma>0$ such that
$$
\inf \{\|P_{M} \nabla I(u)\|: a\leq I(u)\leq b, dist(u, M)<\gamma\}>0,
$$
where $P_{M}: X\rightarrow M$ is the orthogonal projection of $X$ onto $M$.
\end{defn}

\noindent
Therefore, if the above condition holds, then $I_{\mid M}$ has no critical points $u$ such that $a\leq I(u)\leq b$, with some uniformity.
\begin{thm}\cite{MS}\label{MS}
Let $X$ be a Hilbert space and $X_{i}$, $i=1, 2, 3$ three subspaces of $X$ such that $X=X_{1}\oplus X_{2}\oplus X_{3}$ and $dim(X_{i})<+\infty$ with $i=1, 2$. Let us denote by $P_{i}$ the orthogonal projection of $X$ onto $X_{i}$, $I\in C^{1, 1}(X, \R)$. Let $R, R', R'', \varrho>0$ such that $R'<\varrho<R''$. Define
\begin{align*}
&\Gamma=\{u\in X_{1}\oplus X_{2}: R'\leq \|u\|\leq R'', \|P_{1}u\|\leq R\} \mbox{ and } T=\partial_{X_{1}\oplus X_{2}} \Gamma, \\
&S_{23}(\varrho)=\{u\in X_{2}\oplus X_{3}: \|u\|=\varrho\} \mbox{ and } B_{23}(\varrho)=\{u\in X_{2}\oplus X_{3}: \|u\|\leq \varrho\}.
\end{align*}
Especially, if $R'=0$, $T$ may be defined as follows:
\begin{align*}
T=&\{u\in X_{1}: \|u\|\leq R\}\cup \{u\in X_{1}\oplus X_{2}: \|P_{2}u\|=R'', \|P_{1}u\|\leq R\} \\
&\cup \{u\in X_{1}\oplus X_{2}: \|P_{2}u\|\leq R'', \|P_{1}u\|= R\}.
\end{align*}
Assume that
$$
a'=\sup I(T)<\inf I(S_{23}(\varrho))=a''.
$$
Let $a$ and $b$ such that $a'<a<a''$ and $b>\sup I(\Gamma)$. Assume that $(\nabla) (I, X_{1}\oplus X_{3}, a, b)$ holds and that the $(PS)_{c}$ condition holds at any $c\in [a, b]$. Then $I$ has at least two critical points in $I^{-1}([a, b])$. Moreover, if
$$
\inf I(B_{23}(\varrho))>a_{1}>-\infty
$$
and the $(PS)_{c}$ condition holds at any $c\in [a_{1}, b]$, then $I$ has another critical level in $[a_{1}, a']$. 
\end{thm}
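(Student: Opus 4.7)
The plan is to prove the theorem by the refined min-max scheme of Marino–Saccon, combining classical linking arguments with the $\nabla$-condition and, for the last critical level, a constrained minimization. Because $I$ is of class $C^{1,1}$, the ordinary gradient flow $\dot{\eta}=-\nabla I(\eta)$ is well defined and enjoys a Lipschitz dependence, so I will use it throughout in place of pseudo-gradients; the $(PS)_c$ hypothesis on the relevant intervals supplies the compactness needed at every deformation-lemma step.

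First, I would verify the topological linking between $T$ and $S_{23}(\varrho)$. Because $X_{1}\oplus X_{2}$ is finite dimensional, a standard Brouwer-degree computation in this slice shows that every continuous $h:\Gamma\to X$ with $h|_{T}=\mathrm{id}$ satisfies $h(\Gamma)\cap S_{23}(\varrho)\neq\emptyset$; this is where $R'<\varrho<R''$ and the constraint $\|P_{1}u\|\le R$ on $\Gamma$ enter. Combined with $a'<a''$, this gives the first min-max level
\[
c_{1}=\inf_{h\in \mathcal{H}}\sup_{u\in\Gamma} I(h(u))\in [a'',b],
\]
where $\mathcal{H}=\{h\in C(\Gamma,X):h|_{T}=\mathrm{id}\}$, and the classical linking theorem supplies a critical point at level $c_{1}$.

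The second critical point is where the $\nabla$-condition is essential. Using $(\nabla)(I,X_{1}\oplus X_{3},a,b)$, the strict positivity of $\|P_{M}\nabla I(u)\|$ on the tube $\{u:\mathrm{dist}(u,X_{1}\oplus X_{3})<\gamma\}\cap I^{-1}([a,b])$ lets me deform the gradient flow so that it is repelled from this tube, i.e.~$\mathrm{dist}(\eta(t,u),X_{1}\oplus X_{3})$ stays bounded below along orbits at energy in $[a,b]$. I would then introduce a restricted min-max class $\tilde{\mathcal{H}}\subset \mathcal{H}$ consisting of deformations whose image always meets the "slab" $\{u:\mathrm{dist}(u,X_{1}\oplus X_{3})\ge \gamma'\}$ for some $\gamma'<\gamma$, and define $c_{2}$ by the corresponding min-max over $\tilde{\mathcal{H}}$. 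The key point is to check that $c_{2}$ is still $\ge a''$ (the linking persists after the restriction) and that either $c_{2}\neq c_{1}$, or, if $c_{2}=c_{1}$, the two critical points coming from the $\mathcal{H}$- and $\tilde{\mathcal{H}}$-classes are geometrically distinct because one can be separated from $X_{1}\oplus X_{3}$ and the other cannot.

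Finally, for the third level in $[a_{1},a']$, I would exploit the new hypothesis $\inf I(B_{23}(\varrho))>a_{1}>-\infty$ together with the fact that, when $R'=0$, the origin lies in $T$ so $I(0)\le a'<a''=\inf I(S_{23}(\varrho))$. Hence $\inf_{B_{23}(\varrho)} I\le a'<\inf_{S_{23}(\varrho)} I$, and by $(PS)$ the infimum is attained at some interior point $u_{3}\in B_{23}(\varrho)$, a critical point of $I|_{X_{2}\oplus X_{3}}$ with $I(u_{3})\in [a_{1},a']$. To upgrade $u_{3}$ to a critical point of $I$ on the whole space $X$, I would invoke $(\nabla)(I,X_{1}\oplus X_{3},a_{1},a')$: if the full gradient were nonzero at $u_{3}$, the $\nabla$-condition would give $\|P_{X_{1}\oplus X_{3}}\nabla I(u_{3})\|>0$, and a small perturbation in the $X_{1}\oplus X_{3}$-direction would violate minimality. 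The main obstacle is the distinctness statement in the $\nabla$-step: constructing an $X_{1}\oplus X_{3}$-avoiding deformation that simultaneously preserves the linking with $S_{23}(\varrho)$ requires a careful tuning of the Lipschitz constant of $\nabla I$ against the constants $\gamma$ and $\varrho-R'$, and this delicate flow construction is the technical heart of the argument.
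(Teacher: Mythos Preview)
This theorem is not proved in the paper: it is quoted from Marino and Saccon \cite{MS} and used as a black box, so there is no in-paper proof against which to compare your attempt. On its own merits, your sketch for the first two critical points (classical linking plus a gradient flow that avoids a tube around $X_{1}\oplus X_{3}$ thanks to the $\nabla$-condition) is in the correct spirit, although the construction of the restricted class $\tilde{\mathcal H}$ and the distinctness of the two critical points are precisely the delicate parts of \cite{MS} and would need substantial work to make rigorous.

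There is, however, a genuine gap in your handling of the ``moreover'' clause. You propose to minimise $I$ over $B_{23}(\varrho)$ and then upgrade the constrained minimiser $u_{3}\in X_{2}\oplus X_{3}$ to a free critical point by invoking $(\nabla)(I,X_{1}\oplus X_{3},a_{1},a')$. Two things fail. First, that $\nabla$-condition is \emph{not} among the hypotheses: the theorem assumes only $(\nabla)(I,X_{1}\oplus X_{3},a,b)$, and since $a'<a$ the interval $[a_{1},a']$ is disjoint from $[a,b]$. Second, even granting the extra hypothesis, the perturbation step is circular. An interior minimiser $u_{3}$ of $I$ restricted to $B_{23}(\varrho)\subset X_{2}\oplus X_{3}$ satisfies only $P_{X_{2}\oplus X_{3}}\nabla I(u_{3})=0$, i.e.\ $\nabla I(u_{3})\in X_{1}$; perturbing $u_{3}$ in any $X_{1}\oplus X_{3}$-direction leaves the subspace $X_{2}\oplus X_{3}$, so no contradiction with minimality over $B_{23}(\varrho)$ can arise. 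In fact, if the $\nabla$-condition did apply at $u_{3}$ it would force $P_{1}\nabla I(u_{3})\neq 0$ and hence show that $u_{3}$ is \emph{not} a critical point of $I$ --- the opposite of your claim. The third critical level in \cite{MS} is obtained instead by a separate min-max (a saddle-point/linking argument dual to the one producing $c_{1}$) that uses only $\inf I(B_{23}(\varrho))>a_{1}$, $\sup I(T)=a'$ and the $(PS)$ condition on $[a_{1},b]$; no $\nabla$-hypothesis on that lower interval is required. Your argument also tacitly assumes $R'=0$ in order to place the origin in $T$, whereas the statement allows $R'>0$.
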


\noindent
Now, we introduce the energy functional $I_{\lambda}: \Y\rightarrow \R$ associated to \eqref{P3}, that is,
$$
I(u)=\frac{1}{2}\|u\|^{2}_{\Y}-\frac{\lambda}{2}\int_{\Omega} u^{2}\, dx-\int_{\Omega} F(x, u)\, dx
$$
defined for any $u\in \Y$. From the assumptions on $f$, it is clear that the functional $I_{\lambda}$ is well-defined,  $I_{\lambda}\in C^{1}(\Y, \R)$ and its derivative is given by
$$
\langle I'_{\lambda}(u), v\rangle=\iint_{\C} y^{1-2s} \nabla u \nabla v \, dx dy-\lambda \int_{\Omega} u v \, dx-\int_{\Omega} f(x, u) v\, dx \mbox{ for any } v\in \Y.
$$
Since we aim to show that $I_{\lambda}$ verifies the assumptions of Theorem \ref{MS}, we need to prove some useful lemmas which allow us to verify that there exist $0<a<b$ such that the condition $(\nabla)(I_{\lambda}, H_{i-1}\oplus H_{i}^{\perp}, a, b)$ holds.

\begin{lem}\label{lem1s}
Assume that $(f1)$ and $(f4)$ hold. Then, for any $\delta\in (0, \min\{\mu_{i+1}-\mu_{i}, \mu_{i}-\mu_{i-1}\})$ there exists $\e_{0}>0$ such that for any $\lambda\in [\mu_{i}-\delta, \mu_{i}+\delta]$ the unique critical point $u$ of $I_{\lambda}$ constrained on $H_{i-1}\oplus H_{i}^{\perp}$ such that $I_{\lambda}(u)\in [-\e_{0}, \e_{0}]$ is the trivial one.
\end{lem}

\begin{proof}
Suppose by contradiction that there exist $\delta_{0}$, $\lambda_{n}\in [\mu_{i}- \delta_{0}, \mu_{i}+ \delta_{0}]$ and $\{u_{n}\}_{n\in \N} \subset H_{i-1} \oplus H_{i}^{\perp}\setminus\{0\}$ such that, for any $v\in H_{i-1}\oplus H_{i}^{\perp}$ we get
\begin{align}
&I_{\lambda_{n}}(u_{n})=\frac{1}{2} \iint_{\C} y^{1-2s} |\nabla u_{n}|^{2} \, dx dy-\frac{\lambda_{n}}{2}\int_{\Omega} |u_{n}|^{2} dx - \int_{\Omega} F(x, u_{n})\, dx 
\rightarrow 0 \label{9s}\\
&\langle I_{\lambda_{n}}'(u_{n}), v\rangle=\iint_{\C} y^{1-2s} \nabla u_{n} \nabla v \, dx dy- \lambda_{n}\int_{\Omega} u_{n}\, v \, dx - \int_{\Omega} f(x, u_{n}) \,v\,dx
=0. \label{10s}
\end{align}
Up to a subsequence, we may assume that $\lambda_{n}\rightarrow \lambda \in [\mu_{i}- \delta_{0}, \mu_{i}+ \delta_{0}]$ as $n\rightarrow \infty$. Let us define $A_{n}:=\{x \in \Omega : |u_{n}(x)|\geq T\}$. Then, by assumption $(f4)$ we deduce 
\begin{align}\label{tv1s}
2 I_{\lambda_{n}}(u_{n})- \langle I_{\lambda_{n}}'(u_{n}), u_{n}\rangle = \int_{\Omega}(f(x, u_{n}) \, u_{n} - 2F(x, u_{n}))\, dx \geq c_{2} \int_{A_{n}} |u_{n}|^{\beta} \, dx. 
\end{align}
By using \eqref{9s} and \eqref{10s} with $v=u_{n}$, from inequality \eqref{tv1s} we get
\begin{align}\label{11s}
\int_{A_{n}} |u_{n}|^{\beta} \, dx \rightarrow 0 \, \mbox{ as } n\rightarrow \infty. 
\end{align}
Now, let us observe that 
\begin{align}\label{12s}
\int_{\Omega} |u_{n}|^{\beta} \, dx = \int_{A_{n}} |u_{n}|^{\beta} \, dx + \int_{\Omega \setminus A_{n}}|u_{n}|^{\beta} \, dx \leq \int_{A_{n}} |u_{n}|^{\beta}\, dx + |\Omega| T^{\beta}. 
\end{align}
Set $u_{n}= v_{n}+ w_{n} \in H_{i-1}\oplus H_{i}^{\perp}$. Then, by using \eqref{7s}, \eqref{8s}, the fact that $\|u_{n}\|_{\Y}^{2}=\|v_{n}\|_{\Y}^{2}+ \|w_{n}\|_{\Y}^{2}$ and \eqref{10s}, we have
\begin{align}\label{13s}
&\int_{\Omega} f(x, u_{n})(w_{n}- v_{n})\, dx\nonumber \\
&= \|w_{n}\|_{\Y}^{2} - \lambda_{n}\int_{\Omega} |w_{n}|^{2} dx - \|v_{n}\|_{\Y}^{2} + \lambda_{n} \int_{\Omega} |v_{n}|^{2} \, dx\nonumber \\
&\geq \frac{\mu_{i+1} - \lambda_{n}}{\mu_{i+1}}\|w_{n}\|_{\Y}^{2} - \frac{\mu_{i-1} - \lambda_{n}}{\mu_{i-1}}\|v_{n}\|_{\Y}^{2}\nonumber \\
&\geq c_{3}\|u_{n}\|_{\Y}^{2}, 
\end{align}
where $c_{3}= \min \left\{\frac{\mu_{i+1}-\lambda_{n}}{\mu_{i+1}}, \frac{\lambda_{n}-\mu_{i-1}}{\mu_{i-1}}\right\}$.\\
From Theorem \ref{SSembedding} and by applying H\"older's inequality, we can infer that
\begin{align}\label{tv2s}
\int_{\Omega} f(x, u_{n})(w_{n}- v_{n})\, dx & \leq \left( \int_{\Omega} |f(x, u_{n})|^{\frac{p+1}{p}}dx \right)^{\frac{p}{p+1}} \left( \int_{\Omega} |w_{n}- v_{n}|^{p+1} dx\right)^{\frac{1}{p+1}} \nonumber \\
& \leq 2C \|u_{n}\|_{\Y} \left(\int_{\Omega} |f(x, u_{n})|^{\frac{p+1}{p}}dx \right)^{\frac{p}{p+1}}.
\end{align}
Taking into account \eqref{13s} and \eqref{tv2s}, and recalling that $u_{n}\not \equiv 0$, we have
\begin{align}\label{14s}
\|u_{n}\|_{\Y}\leq c_{4} \left( \int_{\Omega} |f(x, u_{n})|^{\frac{p+1}{p}}dx \right)^{\frac{p}{p+1}}, 
\end{align}
for some positive constant  $c_{4}$. \\
Now, by using $(f1)$, Theorem \ref{SSembedding}, \eqref{12s} and H\"older's inequality we can deduce that
\begin{align}\label{tv3s}
\left| \int_{\Omega} f(x, u_{n})(v_{n}- w_{n})\, dx \right| &\leq\int_{\Omega} |f(x, u_{n})| |v_{n}- w_{n}|\, dx \nonumber \\
&\leq c_{1} \int_{\Omega} (|v_{n} - w_{n}|+ |u_{n}|^{p} |v_{n}- w_{n}|) \, dx\nonumber \\
&\leq c_{1} \|v_{n}- w_{n}\|_{L^{1}(\Omega)}+ c_{1} \left( \int_{\Omega} |u_{n}|^{\beta} dx \right)^{\frac{p}{\beta}}\left(\int_{\Omega} |v_{n}- w_{n}|^{\frac{\beta}{\beta-p}}dx \right)^{\frac{\beta-p}{\beta}} \nonumber \\
&\leq c_{1} C \|v_{n}- w_{n}\|_{\Y} \left( 1+ \left( \int_{A_{n}} |u_{n}|^{\beta} dx + |\Omega|T^{\beta}\right)^{\frac{p}{\beta}}\right)\nonumber \\
&\leq c_{1} C\|u_{n}\|_{\Y} \left( 1+ \left( \int_{A_{n}} |u_{n}|^{\beta} + |\Omega| T^{\beta}\right)^{\frac{p}{\beta}}\right). 
\end{align}
Therefore, putting together \eqref{11s}, \eqref{13s} and \eqref{tv3s}, we deduce that $\{u_{n}\}_{n\in \N}$ is bounded in $\Y$. Hence, in view of Theorem \ref{SSembedding}, we may assume that, up to a subsequence, there are a sequence $\{u_{n}\}_{n\in \N}$ and a function $u\in \Y$ such that 
\begin{align}\begin{split}\label{limitss}
&u_{n}\rightharpoonup u \mbox{ in } \Y \\
&u_{n}\rightarrow u \mbox{ in } L^{r}(\Omega) \mbox{ for all } r\in [1, 2^{*}_{s})\\
&u_{n}(x) \rightarrow u(x) \mbox{ a.e. } x\in \Omega. 
\end{split}\end{align}
By applying \eqref{9s}, \eqref{10s} and Fatou's Lemma we get
\begin{align*}
0&= \lim_{n\rightarrow \infty} 2I_{\lambda_{n}}(u_{n}) - \langle I_{\lambda_{n}}'(u_{n}), u_{n}\rangle \\
&= \lim_{n\rightarrow \infty} \int_{\Omega} (f(x, u_{n}) u_{n} - 2F(x, u_{n}))\, dx \\
&\geq \int_{\Omega} \liminf_{n\rightarrow \infty} (f(x, u_{n})u_{n} - 2 F(x, u_{n}))\, dx\\
&= \int_{\Omega} (f(x,u)u- 2F(x,u))\, dx
\end{align*}
which, combined with the assumptions $(f2)$ and $(f4)$, gives $u=0$. \\
Now, we distinguish two cases. Let us assume that $u_{n}\rightarrow 0$ as $n\rightarrow \infty$ in $H^{1}_{0,L}(y^{1-2s})$. From $(f1)$ and $(f2)$ we know that for any $\e>0$ there exists $c_{\e}>0$ such that
\begin{equation}\label{mbmss}
|f(x, t)|\leq \e |t|+c_{\e}|t|^{p} \mbox{ for any } (x, t)\in \Omega\times \R.
\end{equation} 
By using \eqref{14s}, \eqref{mbmss} and Theorem \ref{SSembedding}, we have
\begin{align*}
1\leq \lim_{n\rightarrow \infty} c_{4} \frac{\left(\int_{\Omega} |f(x, u_{n})|^{\frac{p+1}{p}}dx \right)^{\frac{p}{p+1}}}{\|u_{n}\|_{\Y}}=0.
\end{align*}
On the other hand, if there exists $\alpha>0$ such that $\|u_{n}\|_{\Y}\geq \alpha$ for $n$ large enough, then from \eqref{14s}, \eqref{limitss}, $(f2)$, the Dominated Convergence Theorem and $u=0$, we get
\begin{align*}
0<\alpha \leq \lim_{n\rightarrow \infty} c_{4} \left( \int_{\Omega} |f(x, u_{n})|^{\frac{p+1}{p}}dx \right)^{\frac{p}{p+1}}=0, 
\end{align*}
which is a contradiction. This ends the proof of the lemma.
\end{proof}

\begin{lem}\label{lem2s}
Assume that $(f1)$ and $(f4)$ hold, $\lambda\in (\mu_{i-1}, \mu_{i+1})$ and $\{u_{n}\}_{n\in \N}\subset \Y$ such that $I_{\lambda}(u_{n})$ is bounded, $Pu_{n}\rightarrow 0$ and $Q \nabla I_{\lambda}(u_{n})\rightarrow 0$ as $n\rightarrow +\infty$. Then $\{u_{n}\}_{n\in \N}$ is bounded in $\Y$.
\end{lem}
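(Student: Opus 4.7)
The plan is to adapt the proof of Lemma \ref{lem1s}, replacing the condition $I'_\lambda(u_n)=0$ by the weaker hypotheses $Pu_n\to 0$ and $Q\nabla I_\lambda(u_n)\to 0$. I decompose $u_n=v_n+z_n+w_n$ according to $\Y = H_{i-1}\oplus H_i^0 \oplus H_i^\perp$, so that $z_n=Pu_n\to 0$ in $\Y$. With $A_n=\{x\in\Omega : |u_n(x)|\geq T\}$, $a_n=\int_{A_n}|u_n|^\beta\,dx$ and $M_n^2=\|v_n\|_\Y^2+\|w_n\|_\Y^2$, the aim is to control $M_n$ and $a_n$ simultaneously.

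First, since $w_n-v_n\in H_{i-1}\oplus H_i^\perp$ and $Q$ is the orthogonal projection onto this subspace, one has $\langle I'_\lambda(u_n), w_n-v_n\rangle = \langle Q\nabla I_\lambda(u_n), w_n-v_n\rangle_\Y = o(1)M_n$. Expanding the left-hand side via the orthogonality of $\{\psi_k\}$ in $\Y$ and in $L^2(\Omega)$, then invoking Lemma \ref{Slem} with $\lambda\in(\mu_{i-1},\mu_{i+1})$, I obtain the analogue of \eqref{13s}:
$$c_3 M_n^2 \leq \int_\Omega f(x,u_n)(w_n-v_n)\,dx + o(1) M_n,$$
with $c_3>0$. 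A Hölder splitting over $A_n$ and $\Omega\setminus A_n$ as in \eqref{tv3s} bounds the nonlinear integral by $CM_n(1+a_n^{p/\beta})$, which after division by $M_n$ yields
$$M_n \leq C(1+a_n^{p/\beta})+o(1). \qquad (\star)$$

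Next, assumption $(f4)$ gives $c_2 a_n\leq 2I_\lambda(u_n)-\langle I'_\lambda(u_n),u_n\rangle$. The new difficulty, compared with Lemma \ref{lem1s}, is to control $\langle I'_\lambda(u_n),u_n\rangle$ without a full Palais--Smale hypothesis. I split $\langle I'_\lambda(u_n),u_n\rangle = \langle P\nabla I_\lambda(u_n),Pu_n\rangle_\Y + \langle Q\nabla I_\lambda(u_n),Qu_n\rangle_\Y$: the second term is $o(1)\|u_n\|_\Y\leq o(1)(M_n+1)$. For the first I exploit that $H_i^0$ is finite-dimensional, so all $L^q$-norms are equivalent to $\|\cdot\|_\Y$ on it; the same Hölder splitting then shows $|\langle I'_\lambda(u_n),e\rangle|\leq C+Ca_n^{p/\beta}+o(1)$ for every unit $e\in H_i^0$. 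Hence $\|P\nabla I_\lambda(u_n)\|_\Y\leq C(1+a_n^{p/\beta})$, and since $\|Pu_n\|_\Y=o(1)$ the $P$-contribution is $o(1)(1+a_n^{p/\beta})$.

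Combining these bounds with the boundedness of $I_\lambda(u_n)$ and with $(\star)$, one arrives at
$$c_2 a_n \leq C + o(1)\,a_n^{p/\beta}.$$
The assumption $\beta>\tfrac{2Np}{N+2s}>p$ (valid for $N>2s$) forces $p/\beta<1$, so $\{a_n\}$ must be bounded: otherwise dividing by $a_n$ yields a contradiction. Plugging this back into $(\star)$ bounds $M_n$, and finally $\|u_n\|_\Y^2 = M_n^2+\|z_n\|_\Y^2$ is bounded, as required. The only genuine obstacle is the estimate on $\|P\nabla I_\lambda(u_n)\|_\Y$, which is what the finite-dimensionality of $H_i^0$ is needed for.
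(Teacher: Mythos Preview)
Your argument is correct, and the ingredients you use---the orthogonal decomposition, the eigenvalue inequalities of Lemma~\ref{Slem}, the H\"older split with exponents $\beta/p$ and $\beta/(\beta-p)$, and the finite-dimensionality of $H_i^0$---are exactly the ones the paper relies on. The organization, however, is genuinely different.

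The paper argues by contradiction: assuming $\|u_n\|_\Y\to\infty$, it first computes $2I_\lambda(u_n)-\langle Q\nabla I_\lambda(u_n),u_n\rangle$ explicitly (the identity produces the extra terms $\|Pu_n\|_\Y^2-\lambda\|Pu_n\|_{L^2}^2-\int_\Omega f(x,u_n)Pu_n$, which are controlled via $\|Pu_n\|_{L^\infty}\to 0$) to obtain $\|u_n\|_{L^\beta}^p/\|u_n\|_\Y\to 0$. It then tests $Q\nabla I_\lambda(u_n)$ against $-v_n$ and $w_n$ \emph{separately} to force $\|v_n\|_\Y/\|u_n\|_\Y\to 0$ and $\|w_n\|_\Y/\|u_n\|_\Y\to 0$, which together with $\|Pu_n\|_\Y/\|u_n\|_\Y\to 0$ contradicts $1=\|u_n\|_\Y/\|u_n\|_\Y$.

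Your route is direct: you test against $w_n-v_n$ in one stroke (mirroring Lemma~\ref{lem1s}) to couple $M_n$ with $a_n$ via $(\star)$, and you handle the $P$-contribution to $\langle I'_\lambda(u_n),u_n\rangle$ by bounding $\|P\nabla I_\lambda(u_n)\|_\Y$ on the finite-dimensional space rather than computing it explicitly. The closed bootstrap $c_2 a_n\le C+o(1)\,a_n^{p/\beta}$ then finishes without contradiction. This is arguably more economical and stays closer to the template of Lemma~\ref{lem1s}; the paper's contradiction/ratio argument, on the other hand, isolates more cleanly the role each component plays and is reused verbatim in Lemma~\ref{lem6s}. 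One small point worth making explicit in your write-up: the division by $M_n$ is harmless because if $M_n=0$ along a subsequence then $u_n=Pu_n\to 0$ there, so boundedness is immediate.
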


\begin{proof}
Assume by contradiction that, up to a subsequence, $\|u_{n}\|_{\Y}\rightarrow \infty$ as $n\rightarrow \infty$. \\
Note that $u_{n}=Pu_{n}+Qu_{n}$, $Pu_{n}\rightarrow 0$ in $\Y$ and $Q\nabla I_{\lambda}(u_{n})\rightarrow 0$, where $\nabla I_{\lambda}(u_{n})= v_{n}$ is such that 
\begin{equation*}
\langle I'_{\lambda}(u_{n}), z\rangle= \iint_{\C} y^{1-2s} v_{n} \nabla z \, dxdy
\end{equation*}
for any $z\in \Y$. So we get
\begin{equation*}
v_{n}= u_{n}- K(\lambda u_{n} + f(x, u_{n})),
\end{equation*}
where $K$ is defined as in Lemma \ref{clem}.
Now, we recall that $u_{n}=Pu_{n}+Qu_{n}$ and $Pu_{n}\rightarrow 0$ in $\Y$. Then, by exploiting the assumption $(f1)$, H\"older's inequality and the fact that all norms in $H_{i}^{0}$ are equivalent, we can see that
\begin{align}\label{tv4s}
\left|\int_{\Omega} f(x, u_{n}) Pu_{n}\, dx \right|&\leq \int_{\Omega} |f(x, u_{n})| |Pu_{n}|\, dx \nonumber \\
&\leq c_{1} \left( \int_{\Omega} |Pu_{n}|\, dx + \int_{\Omega} |Pu_{n}| |u_{n}|^{p}\, dx \right) \nonumber \\
&\leq c_{1} \|Pu_{n}\|_{L^{1}(\Omega)} + c_{1} \left( \int_{\Omega} |u_{n}|^{\beta} dx \right)^{\frac{p}{\beta}}\left(\int_{\Omega} |Pu_{n}|^{\frac{\beta}{\beta-p}}dx \right)^{\frac{\beta-p}{\beta}}\nonumber \\
&\leq c_{5} \|Pu_{n}\|_{L^{\infty}(\Omega)} (1+ \|u_{n}\|_{L^{\beta}(\Omega)}^{p}), 
\end{align}
with $c_{5}>0$. 
Now, from the assumption $(f4)$ and \eqref{tv4s}, we can deduce that
\begin{align}\label{tv5s}
&2I_{\lambda}(u_{n})- \langle Q\nabla I_{\lambda}(u_{n}), u_{n}\rangle \nonumber \\
&= \int_{\Omega} (f(x, u_{n}) u_{n} - 2F(x, u_{n}))\, dx + \|Pu_{n}\|_{\Y}^{2} - \lambda \int_{\Omega} |Pu_{n}|^{2} dx - \int_{\Omega} f(x, u_{n}) Pu_{n} \, dx \nonumber \\
&\geq c_{2} \|u_{n}\|_{L^{\beta}(\Omega)}^{\beta} + \|Pu_{n}\|_{\Y}^{2} -\lambda \|Pu_{n}\|_{L^{2}(\Omega)}^{2} - c_{5} \|Pu_{n}\|_{L^{\infty}(\Omega)} (1+ \|u_{n}\|_{L^{\beta}(\Omega)}^{p}).
\end{align}
Here we used that for every $z\in \Y$, $Pz$ is smooth and $\nabla Pu_{n}=P\nabla u_{n}$ due to $u\in {\rm span}\{\psi_{i}, \dots, \psi_{j}\}$ and $Pz\perp Qz$, so we have
\begin{align*}
\iint_{\C} y^{1-2s} &\nabla (P(u_{n}-K(\lambda u_{n}+g(x, u_{n})))) \nabla u_{n} \, dx dy\\
&= \|Pu_{n}\|_{\Y}^{2} - \lambda \int_{\Omega} |Pu_{n}|^{2} dx - \int_{\Omega} f(x, u_{n}) Pu_{n} \, dx.
\end{align*}
Since $1<p<\beta$, ${\rm dim}H_{i}^{0}<+\infty$ and $\|Pu_{n}\|_{L^{\infty}(\Omega)} \rightarrow 0$ as $n\rightarrow \infty$, from \eqref{tv5s} we can infer that
\begin{align}\label{tv60s}
\frac{\|u_{n}\|_{L^{\beta}(\Omega)}^{p}}{\|u_{n}\|_{\Y}}\rightarrow 0 \mbox{ as } n\rightarrow \infty. 
\end{align}
Set $Qu_{n}= v_{n}+ w_{n} \in H_{i-1}\oplus H_{i}^{\perp}$. By using $(f1)$, Theorem \ref{SSembedding}, \eqref{7s} and H\"older's inequality we have
\begin{align*}
\langle Q\nabla I_{\lambda}(u_{n}), -v_{n} \rangle &= \lambda \|v_{n}\|_{L^{2}(\Omega)}^{2} - \|v_{n}\|_{\Y}^{2} + \int_{\Omega} f(x, u_{n})v_{n}\, dx\\
&\geq \frac{\lambda- \mu_{i-1}}{\mu_{i-1}} \|v_{n}\|^{2} - \int_{\Omega} |f(x, u_{n})||v_{n}|\, dx \\
&\geq \frac{\lambda- \mu_{i-1}}{\mu_{i-1}}\|v_{n}\|_{\Y}^{2} -c_{1} \int_{\Omega} (|u_{n}|^{p} |v_{n}| + |v_{n}|)\, dx \\
&\geq \frac{\lambda- \mu_{i-1}}{\mu_{i-1}}\|v_{n}\|_{\Y}^{2} -c_{1} \left(\int_{\Omega} |u_{n}|^{\beta}dx\right)^{\frac{p}{\beta}} \left(\int_{\Omega} |v_{n}|^{\frac{\beta}{\beta-p}} dx\right)^{\frac{\beta-p}{\beta}} - c_{1}\|v_{n}\|_{L^{1}(\Omega)} \\
&\geq \frac{\lambda- \mu_{i-1}}{\mu_{i-1}}\|v_{n}\|_{\Y}^{2} -c'_{1} C\|v_{n}\|_{\Y} (1+ \|u_{n}\|_{L^{\beta}(\Omega)}^{p}). 
\end{align*}
Therefore, \eqref{tv60s} and H\"older's inequality imply that
\begin{align}\label{16s}
\frac{\|v_{n}\|_{\Y}}{\|u_{n}\|_{\Y}}\rightarrow 0 \mbox{ as } n\rightarrow \infty. 
\end{align}
In similar fashion we can infer that
\begin{align}\label{17s}
\frac{\|w_{n}\|_{\Y}}{\|u_{n}\|_{\Y}}\rightarrow 0 \mbox{ as } n\rightarrow \infty. 
\end{align}
We can also show that 
\begin{align}\label{18s}
\frac{\|Pu_{n}\|_{\Y}}{\|u_{n}\|_{\Y}}\rightarrow 0 \mbox{ as } n\rightarrow \infty. 
\end{align}
Indeed, if \eqref{18s} does not hold, then $\frac{\|Pu_{n}\|_{\Y}}{\|u_{n}\|_{\Y}}\rightarrow \ell\in (0, +\infty)$ and we can see that 
$$
0\leftarrow \|Pu_{n}\|_{\Y}=\frac{\|Pu_{n}\|_{\Y}}{\|u_{n}\|_{\Y}}\|u_{n}\|_{\Y}\rightarrow \ell \cdot (+\infty)=+\infty
$$
which is impossible.
Putting together \eqref{16s}, \eqref{17s} and \eqref{18s} we deduce that
\begin{align*}
1= \frac{\|u_{n}\|_{\Y}}{\|u_{n}\|_{\Y}}\leq \frac{\|v_{n}\|_{\Y}+ \|Pu_{n}\|_{\Y} + \|w_{n}\|_{\Y}}{\|u_{n}\|_{\Y}}\rightarrow 0 \mbox{ as } n\rightarrow \infty, 
\end{align*}
which is a contradiction. Thus $\{u_{n}\}_{n\in \N}$ is bounded in $\Y$. 
\end{proof}

\begin{lem}\label{lem3s}
Assume $(f1)$ and $(f4)$. Then, for any $\delta\in (0, \min\{\mu_{i+1}-\mu_{i}, \mu_{i}-\mu_{i-1}\})$ there exists $\e_{0}>0$ such that for any $\lambda\in [\mu_{i}-\delta, \mu_{i}+\delta]$ and for any $\e_{1}, \e_{2}\in (0, \e_{0})$ with $\e_{1}<\e_{2}$, the condition $(\nabla) (I_{\lambda}, H_{i-1}\oplus H_{i}^{\perp}, \e_{1}, \e_{2})$ holds.
\end{lem}

\begin{proof}
Suppose by contradiction that there exists a positive constant $\delta_{0}$ such that for all $\varepsilon_{0}>0$ there are $\lambda\in [\mu_{i}- \delta_{0}, \mu_{i}+ \delta_{0}]$ and $\varepsilon_{1}, \varepsilon_{2} \in (0, \varepsilon_{0})$ with $\varepsilon_{1}< \varepsilon_{2}$ such that the condition $(\nabla)(I_{\lambda}, H_{i-1}\oplus H_{i}^{\perp}, \varepsilon_{1}, \varepsilon_{2})$ does not hold. \\
Let $\varepsilon_{0}>0$ be as in Lemma \ref{lem1s}. Then, we can find a sequence $\{u_{n}\}_{n\in \N}\subset \Y$ such that ${\rm dist}(u_{n}, H_{i-1} \oplus H_{i}^{\perp})\rightarrow 0$, $I_{\lambda}(u_{n})\in (\varepsilon_{1}, \varepsilon_{2})$ and $Q\nabla I_{\lambda}(U_{n})\rightarrow 0$. By Lemma \ref{lem2s} we deduce that $\{u_{n}\}_{n\in \N}$ is bounded. Thus, by applying Theorem \ref{SSembedding}, there are a subsequence (still denoted by $u_{n}$) and $u\in \Y$ such that $u_{n}\rightharpoonup u$ in $\Y$ and $u_{n}\rightarrow u$ in $L^{q}(\Omega)$ for any $q\in [1, 2^{*}_{s})$. Taking into account assumption $(f1)$, $Q\nabla I_{\lambda}(u_{n})\rightarrow 0$, $Pu_{n}\rightarrow 0$ and Lemma \ref{clem}, we can see that 
$$
Q\nabla I_{\lambda}(u_{n})=u_{n}-Pu_{n}-K(\lambda u_{n}+f(x, u_{n}))
$$
yields $u_{n}\rightarrow u$ in $\Y$ and $u$ is a critical point of $I_{\lambda}$ constrained on $H_{i-1}\oplus H_{i}^{\perp}$. Hence, in view of  Lemma \ref{lem1s}, we can infer that $u=0$. Since $0<\varepsilon_{1}\leq I_{\lambda}(u)$, we obtain a contradiction.  
\end{proof}

\noindent
Let us introduce the following notations: for fixed $i, k\in \N$ and $R, \varrho >0$, let
\begin{align*}
&B_{i}(R)= \{u\in H_{i} : \|u\|_{\Y}\leq R\}, \\
&T_{i-1, i}(R) = \{u \in H_{i-1}: \|u\|_{\Y}\leq R\} \cup \{u \in H_{i} : \|u\|_{\Y}=R\},  \\
&S_{k}^{+}(\varrho)= \{u \in H_{k}^{\perp}: \|u\|_{\Y}= \varrho\}, \\
&B_{k}^{+}(\varrho)= \{u\in H_{k}^{\perp}: \|u\|_{\Y}\leq \varrho\}.
\end{align*}

\begin{lem}\label{lem4s}
Assume that $(f1)$-$(f3)$ and $(f5)$. Then, for any $\lambda\in (\mu_{i-1}, \mu_{i+1})$, there are $R>\varrho>0$ such that
$$
0=\sup I_{\lambda}(T_{i-1, i}(R))<\inf I_{\lambda}(S_{i-1}^{+}(\varrho)).
$$
\end{lem}

\begin{proof}
By using \eqref{7s} and the assumption $(f5)$, for any $u\in H_{i-1}$ and $\lambda\in (\mu_{i-1}, \mu_{i})$ we have
\begin{align}\label{19s}
I_{\lambda}(u)&= \frac{1}{2} \|u\|_{\Y}^{2} - \frac{\lambda}{2} \int_{\Omega} |u|^{2} dx - \int_{\Omega} F(x, u)\, dx \nonumber \\
&\leq \frac{\mu_{i-1}- \lambda}{2\mu_{i-1}}\|u\|_{\Y}^{2} \leq 0. 
\end{align}
Taking into account the assumption $(f3)$ and the continuity of $F$, for any $c_{6}>0$ there is $M_{1}>0$ such that 
\begin{align}\label{tv6s}
F(x, t) \geq \frac{c_{6}}{2}t^{2} - M_{1} \quad \forall (x, t)\in \Omega \times \R.
\end{align}
By using \eqref{7s} and \eqref{tv6s}, for any $u\in H_{i}$ and $\lambda\in (\mu_{i-1}, \mu_{i})$ we have
\begin{align}\label{tv7s}
I_{\lambda}(u)&= \frac{1}{2} \|u\|_{\Y}^{2} - \frac{\lambda}{2} \int_{\Omega} |u|^{2} dx - \int_{\Omega} F(x, u)\, dx  \nonumber \\
&\leq \frac{\mu_{i}- \lambda}{2\mu_{i}} \|u\|_{\Y}^{2} - \frac{c_{6}}{2} \|u\|_{L^{2}(\Omega)}^{2} + M_{1}|\Omega| \nonumber \\
& \leq \frac{\mu_{i}- \lambda- c_{6}}{2\mu_{i}} \|u\|_{\Y}^{2} + M_{1}|\Omega|. 
\end{align}
Taking $c_{6}= 2(\mu_{i}- \lambda)$, from \eqref{tv7s} we deduce that 
\begin{align}\label{20s}
I_{\lambda}(u)\rightarrow - \infty \mbox{ as } \|u\|_{\Y}\rightarrow \infty. 
\end{align}
Now, we note that $(f1)$ and $(f2)$ imply that for any $\e>0$ there is $C_{\e}>0$ such that 
\begin{align}\label{tv8s}
F(x, t)\leq \frac{\e}{2} t^{2} + C_{\e} |t|^{p+1} \quad \forall (x, t)\in \Omega \times \R,
\end{align}
which gives
\begin{align}\label{21s}
\left| \int_{\Omega} F(x, u)\, dx \right| \leq \frac{\e}{2} \|u\|_{L^{2}(\Omega)}^{2} + C_{\e} \|u\|_{L^{p+1}(\Omega)}^{p+1} \quad \forall u\in \Y. 
\end{align}
Thus, from \eqref{21s},  we can see that for any $u\in H_{i-1}^{\perp}$
\begin{align}\label{22s}
I_{\lambda}(u)&= \frac{1}{2}\|u\|_{\Y}^{2} - \frac{\lambda}{2} \int_{\Omega} |u|^{2} dx - \int_{\Omega} F(x, u)\, dx  \nonumber \\
&\geq \frac{\mu_{i}- \lambda- \e}{2 \mu_{i}} \|u\|_{\Y}^{2} - CC_{\e} \|u\|_{\Y}^{p+1}. 
\end{align} 
Take $\e= \frac{\mu_{i}- \lambda}{2}>0$. Recalling that $\lambda \in (\mu_{i-1}, \mu_{i})$ and $p+1>2$, from \eqref{19s}, \eqref{20s} and \eqref{22s}, we can find $R>\varrho >0$ such that
\begin{align*}
\sup I_{\lambda} (T_{i-1, i}(R))< \inf I_{\lambda} (S_{i-1}^{+}(\varrho)).
\end{align*}
\end{proof}

\begin{lem}\label{lem5s}
Assume that $(f5)$ holds. Then, for $R>0$ in Lemma \ref{lem4s} and for any $\e>0$ there exists $\delta'_{i}>0$ such that for any $\lambda\in (\mu_{i}-\delta'_{i}, \mu_{i})$ it holds
$$
\sup I_{\lambda}(B_{i}(R))<\e.
$$
\end{lem}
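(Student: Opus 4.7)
The plan is a direct monotonicity argument exploiting the two assumptions we have at hand, namely inequality \eqref{7s} and $(f5)$. Since we are working on $B_{i}(R)\subset H_{i}$, Lemma \ref{Slem} gives us $\|u\|_{\Y}^{2}\leq \mu_{i}\|u\|_{L^{2}(\Omega)}^{2}$, which is equivalent to the lower bound $\|u\|_{L^{2}(\Omega)}^{2}\geq \|u\|_{\Y}^{2}/\mu_{i}$. This is exactly what we need to control the indefinite quadratic part $\frac{1}{2}\|u\|_{\Y}^{2}-\frac{\lambda}{2}\|u\|_{L^{2}(\Omega)}^{2}$ from above, provided $\lambda>0$ (which will hold automatically once $\delta'_{i}$ is chosen small enough).

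First, I would shrink $\delta'_{i}$ so that $\mu_{i}-\delta'_{i}>0$; then for every $\lambda\in(\mu_{i}-\delta'_{i},\mu_{i})$ and every $u\in H_{i}$, the inequality $-\frac{\lambda}{2}\|u\|_{L^{2}(\Omega)}^{2}\leq -\frac{\lambda}{2\mu_{i}}\|u\|_{\Y}^{2}$ holds. Using $(f5)$, which yields $-\int_{\Omega}F(x,u)\,dx\leq 0$, I obtain
\begin{equation*}
I_{\lambda}(u)\leq \frac{1}{2}\|u\|_{\Y}^{2}-\frac{\lambda}{2\mu_{i}}\|u\|_{\Y}^{2}=\frac{\mu_{i}-\lambda}{2\mu_{i}}\|u\|_{\Y}^{2}.
\end{equation*}
Restricting to $u\in B_{i}(R)$ gives $I_{\lambda}(u)\leq \frac{(\mu_{i}-\lambda)R^{2}}{2\mu_{i}}$.

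Finally, given $\e>0$, I would take $\delta'_{i}:=\min\bigl\{\mu_{i}/2,\,2\mu_{i}\e/R^{2}\bigr\}$. Then for every $\lambda\in(\mu_{i}-\delta'_{i},\mu_{i})$ one has $(\mu_{i}-\lambda)R^{2}/(2\mu_{i})<\delta'_{i}R^{2}/(2\mu_{i})\leq \e$, uniformly in $u\in B_{i}(R)$, which yields the desired $\sup I_{\lambda}(B_{i}(R))<\e$. There is really no substantive obstacle here: the statement is essentially an observation that on the finite-dimensional space $H_{i}$, the functional $I_{\lambda}$ is crushed from above by the quadratic part as $\lambda\uparrow\mu_{i}$, and the nonlinear term only helps thanks to the sign condition $F\geq 0$.
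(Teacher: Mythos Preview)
Your proof is correct and follows essentially the same route as the paper's: both use \eqref{7s} together with $(f5)$ to obtain $I_{\lambda}(u)\leq \frac{\mu_{i}-\lambda}{2\mu_{i}}\|u\|_{\Y}^{2}$ on $H_{i}$, and then choose $\delta'_{i}$ proportional to $\mu_{i}\e/R^{2}$. Your only addition is the explicit positivity requirement $\delta'_{i}\leq \mu_{i}/2$ to guarantee $\lambda>0$, which the paper leaves implicit.
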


\begin{proof}
By using \eqref{7s}, the assumption $(f5)$ and $\lambda<\mu_{i}$, we deduce that, for any $u\in H_{i}$ we deduce
\begin{align*}
I_{\lambda}(u) = \frac{1}{2} \|u\|_{\Y}^{2} - \frac{\lambda}{2} \int_{\Omega} |u|^{2} dx - \int_{\Omega} F(x, u)\, dx \leq \frac{\mu_{i}- \lambda}{2 \mu_{i}} \|u\|_{\Y}^{2}. 
\end{align*}
Take $\delta_{i}'= \frac{\mu_{i}\e}{R^{2}}$. Then we deduce that 
\begin{align*}
\sup I_{\lambda}(B_{i}(R)) \leq \frac{\mu_{i}- \lambda}{2 \mu_{i}} R^{2} = \frac{(\mu_{i}- \lambda)\e}{2 \delta_{i}'}<\e. 
\end{align*}
\end{proof}

\begin{lem}\label{lem6s}
Assume that $(f1)$ and $(f4)$ hold. Then $I_{\lambda}$ verifies the Palais-Smale condition.
\end{lem}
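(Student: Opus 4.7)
The plan is the usual two-step argument for the Palais–Smale condition: first bound the PS sequence $\{u_n\}\subset\Y$, then upgrade to strong convergence. The second step is routine given Theorem \ref{SSembedding}, and the real work is in the first step, where the absence of the Ambrosetti–Rabinowitz condition forces us to use the exponent information in $(f4)$ carefully.

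For the boundedness I would argue by contradiction, setting $w_n=u_n/\|u_n\|_{\Y}$ under the assumption $\|u_n\|_{\Y}\to\infty$, so that (up to a subsequence) $w_n\rightharpoonup w$ in $\Y$, $w_n\to w$ in $L^q(\Omega)$ for every $q\in[1,2^{*}_{s})$, and a.e. The identity $2I_\lambda(u_n)-\langle I'_\lambda(u_n),u_n\rangle=\int_\Omega(f(x,u_n)u_n-2F(x,u_n))\,dx$ combined with $(f4)$ and the PS hypothesis yields the a priori estimate $\|u_n\|_{L^\beta(\Omega)}^\beta\leq C(1+\|u_n\|_{\Y})$. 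Following the same H\"older computation as in the proof of Lemma \ref{lem1s} (the role of $\beta>\tfrac{2Np}{N+2s}$ being to ensure $\beta/(\beta-p)\leq 2^{*}_{s}$), I obtain $\bigl|\int_\Omega f(x,u_n)\varphi\,dx\bigr|\leq C(1+\|u_n\|_{L^\beta}^{p})\|\varphi\|_{\Y}$ for every $\varphi\in\Y$. Dividing the relation $\langle I'_\lambda(u_n),\varphi\rangle=o(\|\varphi\|_{\Y})$ by $\|u_n\|_{\Y}$, choosing $\varphi=w_n-w$, and using that $p/\beta<1$, the nonlinear contribution is $o(1)$ and one ends up with $\langle w_n,w_n-w\rangle_{\Y}\to 0$. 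This forces $\|w\|_{\Y}=1$, and in particular $w\not\equiv 0$. At this point $(f3)$ gives $F(x,u_n)/u_n^{2}\to+\infty$ on the positive-measure set $\{w\neq 0\}$, so Fatou applied to $F(x,u_n)/\|u_n\|_{\Y}^{2}=(F(x,u_n)/u_n^{2})w_n^{2}$ (legitimate since $F\geq 0$ from $(f5)$) yields $\int_\Omega F(x,u_n)/\|u_n\|_{\Y}^{2}\,dx\to+\infty$, contradicting the fact that $I_\lambda(u_n)/\|u_n\|_{\Y}^{2}$ being bounded forces the same integral to tend to the finite limit $\tfrac{1}{2}-\tfrac{\lambda}{2}\|w\|_{L^{2}}^{2}$.

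With $\{u_n\}$ now bounded, I extract a weakly convergent subsequence $u_n\rightharpoonup u$ in $\Y$ with $u_n\to u$ in $L^{q}(\Omega)$ for all $q\in[1,2^{*}_{s})$. Testing $\langle I'_\lambda(u_n),u_n-u\rangle\to 0$, the linear term $\lambda\int u_n(u_n-u)$ vanishes by strong $L^{2}$-convergence; by $(f1)$ the sequence $\{f(\cdot,u_n)\}$ is bounded in $L^{(p+1)/p}(\Omega)$, so H\"older combined with $u_n\to u$ in $L^{p+1}(\Omega)$ (which uses $p+1<2^{*}_{s}$) forces $\int_\Omega f(x,u_n)(u_n-u)\,dx\to 0$. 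This leaves $\|u_n\|_{\Y}^{2}-\langle u_n,u\rangle_{\Y}\to 0$, hence $\|u_n\|_{\Y}\to\|u\|_{\Y}$, and since $\Y$ is a Hilbert space weak convergence plus norm convergence implies strong convergence. The main obstacle is the boundedness step: without AR, the exponent condition $\beta>\tfrac{2Np}{N+2s}$ in $(f4)$ must be used in a subtle way, providing just enough $L^{\beta}$ control on $\{u_n\}$ to tame $f(x,u_n)$ at the correct scale in the limiting argument.
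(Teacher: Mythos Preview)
Your argument is correct, but it proves a slightly weaker statement than the lemma as written: you invoke $(f3)$ (for the blow-up $F(x,u_n)/u_n^{2}\to+\infty$) and $(f5)$ (to justify Fatou), whereas the lemma assumes only $(f1)$ and $(f4)$. This is harmless for the main theorem, where all five hypotheses are in force, but it does not match the stated hypotheses.

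The paper's route is genuinely different. Instead of normalizing and using the superlinearity of $F$, it recycles the spectral decomposition $u_n=v_n+Pu_n+w_n\in H_{i-1}\oplus H_i^{0}\oplus H_i^{\perp}$ from Lemma~\ref{lem2s}. For a Palais--Smale sequence one has the full gradient $\nabla I_\lambda(u_n)\to 0$, so the estimates leading to \eqref{16s} and \eqref{17s} (namely $\|v_n\|_{\Y}/\|u_n\|_{\Y}\to 0$ and $\|w_n\|_{\Y}/\|u_n\|_{\Y}\to 0$) go through unchanged; the only missing piece is $\|Pu_n\|_{\Y}/\|u_n\|_{\Y}\to 0$. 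This is obtained from $(f4)$ alone, via the crude lower bound $f(x,t)t-2F(x,t)\geq c_7|t|-c_8$, which yields
\[
2I_\lambda(u_n)-\langle I'_\lambda(u_n),u_n\rangle \;\geq\; c_7\|u_n\|_{L^{1}(\Omega)}-c_8|\Omega| \;\geq\; c_9'\|Pu_n\|_{\Y}-c_{10}\bigl(\|v_n\|_{\Y}+\|w_n\|_{\Y}+1\bigr),
\]
using the equivalence of norms on the finite-dimensional eigenspace $H_i^{0}$. Dividing by $\|u_n\|_{\Y}$ and combining with \eqref{16s}--\eqref{17s} gives the contradiction. The trade-off is that the paper's argument stays within $(f1)$--$(f4)$ but tacitly needs $\lambda\in(\mu_{i-1},\mu_{i+1})$ for the coercivity constants in \eqref{16s}--\eqref{17s} to be positive, while your approach is insensitive to the position of $\lambda$ but requires the extra structural assumptions $(f3)$ and $(f5)$.
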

\begin{proof}
Let $\{u_{n}\}_{n\in \N}$ be a Palais-Smale sequence of $I_{\lambda}$.  Taking into account $(f1)$, we have only to show that $\{u_{n}\}_{n\in \N}$ is bounded. From the arguments in Lemma \ref{lem2s}, it is enough to prove that 
\begin{equation}\label{tv8s}
\frac{\|Pu_{n}\|_{\Y}}{\|u_{n}\|_{\Y}}\rightarrow 0 \mbox{ as } n\rightarrow +\infty.
\end{equation}
In view of $(f4)$, we know that there exist $c_{7}, c_{8}>0$ such that 
\begin{align*}
f(x, t)t-2F(x, t)\geq c_{7}|t|-c_{8} \mbox{ for any } (x, t)\in \Omega\times \R.
\end{align*}
Then, by using the above inequality, the equivalence of the norms on the finite-dimensional space, and Theorem \ref{SSembedding}, we get
\begin{align}\label{tv9s}
2I_{\lambda}(u_{n})-\langle I'_{\lambda}(u_{n}), u_{n}\rangle&=\int_{\Omega} (f(x, u_{n})u_{n}-2F(x, u_{n})) \, dx \nonumber\\
&\geq \int_{\Omega} (c_{7}|u_{n}|-c_{8})\, dx \nonumber\\
&\geq \int_{\Omega} (c_{7}|Pu_{n}|-c_{7}|v_{n}|-c_{7}|w_{n}|-c_{8})\, dx \nonumber\\
&\geq c_{9}\|Pu_{n}\|_{L^{1}(\Omega)}-c_{10}(\|v_{n}\|_{\Y}+\|w_{n}\|_{\Y}+1) \nonumber \\
&\geq c'_{9}\|Pu_{n}\|_{\Y}-c_{10}(\|v_{n}\|_{\Y}+\|w_{n}\|_{\Y}+1).
\end{align} 
Putting together \eqref{16s}, \eqref{17s} and \eqref{tv9s} we can deduce that \eqref{tv8s} holds.
\end{proof}

\noindent
Now we are in the position to give the proof of the main result of this section.
\begin{proof}[Proof of Theorem \ref{thm2}]
Firstly, we prove the existence of two critical points.
Taking into account Lemma \ref{lem3s}, Lemma \ref{lem4s} and Lemma \ref{lem5s}, we can take $a\in (0, \inf I_{\lambda}(S^{+}_{i-1}(\varrho)))$ and $b>\sup I_{\lambda}(B_{i}(R))$ such that $0<a<b<\e_{0}$. Then the condition $(\nabla)(I_{\lambda}, H_{i-1}\oplus H_{i}^{\perp}, a, b)$ is satisfied.By applying Lemma \ref{lem6s} and Theorem \ref{MS}, we can deduce that there exist two critical points $u_{1}, u_{2}\in H^{1}_{0,L}(y^{1-2s})$ such that $I_{\lambda}(u_{i})\in [a, b]$ for $i=1, 2$.
Now, we prove the existence of a third critical point by invoking the Linking Theorem \cite{Rab}. Taking into account Theorem $5.3$ in \cite{Rab} and Lemma \ref{lem6s}, it is enough to prove that there are $\delta_{i}''>0$ and $R_{1}>\varrho_{1}>0$ such that for any $\lambda\in (\mu_{i}-\delta''_{i}, \mu_{i})$ we get
\begin{equation}\label{23s}
\sup I_{\lambda}(T_{i, i+1}(R_{1}))<\inf I_{\lambda}(S^{+}(\varrho_{1})).
\end{equation}  
Let us note that \eqref{8s}, \eqref{21s} and Theorem \ref{SSembedding} yield 
\begin{align}\label{tv10s}
I_{\lambda}(u)&=\frac{1}{2} \|u\|_{\Y}^{2}-\frac{\lambda}{2} \int_{\Omega} u^{2} \, dx-\int_{\Omega} F(x, u)\, dx \nonumber \\
&\geq \frac{\mu_{i+1}-\lambda-\e}{2\mu_{i+1}} \|u\|_{\Y}^{2}-CC_{\e}\|u\|_{\Y}^{p+1} \mbox{ for any } u\in H_{i}^{\perp}.
\end{align}
Take $\e=\frac{\mu_{i+1}-\lambda}{2}$. Then, recalling that $p>1$, in view of \eqref{tv10s}, we can find $\varrho_{1}>0$ and $\alpha>0$ such that
\begin{equation}\label{24s}
\inf I_{\lambda}(S^{+}_{i}(\varrho_{1}))\geq \alpha>0.
\end{equation}
Now, by using \eqref{7s} and $(f5)$, we deduce that
\begin{align}\label{tv11s}
I_{\lambda}(u)&=\frac{1}{2}\|u\|_{\Y}^{2}-\frac{\lambda}{2} \int_{\Omega} u^{2} \, dx-\int_{\Omega} F(x, u)\, dx \nonumber \\
&\leq \frac{\mu_{i}-\lambda}{2\mu_{i}} \|u\|_{\Y}^{2} \mbox{ for any } u\in H_{i}.
\end{align}
Hence, by using \eqref{tv11s}, we can see that there exist $\delta''_{i}>0$ and $R_{1}>0$ such that for any $\lambda\in (\mu_{i}-\delta''_{i}, \mu_{i})$ we get
\begin{equation}\label{25s}
I_{\lambda}(u)< \alpha \mbox{ for any } \|u\|_{\Y}\leq R_{1}.
\end{equation}
On the other hand, by using \eqref{7s} and $(f5)$, we can see that for any $u\in H_{i+1}$ and $\lambda\in (\mu_{i}-\delta''_{i}, \mu_{i})$, we have 
\begin{align}\label{26s}
I_{\lambda}(u)&=\frac{1}{2} \|u\|_{\Y}^{2}-\frac{\lambda}{2} \int_{\Omega} u^{2} \, dx-\int_{\Omega} F(x, u)\, dx \nonumber \\
&\leq \frac{\mu_{i+1}-\lambda}{2\mu_{i+1}} \|u\|_{\Y}^{2}.
\end{align}
Putting together \eqref{24s}, \eqref{25s} and \eqref{26s} we can infer that \eqref{23s} is verified. By applying the Linking Theorem, we can deduce that there exists a critical point $u_{3}\in \Y$ of $I_{\lambda}$ such that $I_{\lambda}(u)\geq \inf I_{\lambda}(S^{+}_{i}(\varrho_{1}))$.
Choosing $\delta_{i}=\min\{ \delta'_{i}, \delta''_{i} \}$, where $\delta'_{i}$ is given in Lemma \ref{lem5s}, we can conclude that Theorem \ref{thm2} holds.
\end{proof}

\section{multiple solutions for the problem \eqref{P}}
\noindent
This section is devoted to the proof of Theorem \ref{thm1}. Since many calculations are adaptations to the ones presented in the previous section, we will emphasize only the differences between the ``spectral'' and the ``integral'' case.
Firstly, we collect some notations and results which we will use in the sequel. For more details we refer the interested reader to \cite{MBRS, sv1, sv2, sv3}.\\
Let us define
$$
\X=\{u\in H^{s}(\R^{N}): u=0 \mbox{ a.e. in } \R^{N}\setminus \Omega\}.
$$
endowed wit the norm
$$
\|u\|_{\X}^{2}=\iint_{\R^{2N}} \frac{|u(x)-u(y)|^{2}}{|x-y|^{N+2s}} \, dx dy.
$$
Then, $\X$ is a Hilbert space, and the following useful embedding result holds.
\begin{thm}\cite{sv1}\label{Sembedding}
$\X$ is compactly embedded into $L^{q}(\R^{N})$ for any $q\in [1, 2^{*}_{s})$.
\end{thm}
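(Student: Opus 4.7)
The plan is to apply the Riesz-Fréchet-Kolmogorov compactness criterion in $L^q(\R^N)$. Given a bounded sequence $\{u_n\}\subset \X$, the hypothesis that $u_n=0$ a.e.\ in $\R^N\setminus\Omega$ guarantees that the supports are uniformly contained in the bounded set $\bar\Omega$, which immediately handles the tightness assumption. Thus the two remaining conditions to verify are a uniform $L^q$-bound and an $L^q$-equicontinuity of translations, both uniform in $n$.

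The uniform $L^q$-bound would follow from the continuous fractional Sobolev embedding $\X\hookrightarrow L^{2^*_s}(\R^N)$, which in turn follows from the Gagliardo-Nirenberg-Sobolev inequality $\|u\|_{L^{2^*_s}(\R^N)}\leq C(N,s)\,[u]_{H^s(\R^N)}$ valid for every $u\in H^s(\R^N)$ (see \cite{DPV}). Combined with the fact that each $u_n$ is supported in $\bar\Omega$ and H\"older's inequality, this yields uniform boundedness of $\{u_n\}$ in $L^q(\R^N)$ for every $q\in[1,2^*_s]$.

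For the equicontinuity of translations, the key ingredient is the standard estimate
\begin{align*}
\|u(\cdot+h)-u\|_{L^2(\R^N)}^2 \leq C\,|h|^{2s}\,[u]_{H^s(\R^N)}^2,
\end{align*}
which I would prove via Plancherel's theorem, using the pointwise bound $|e^{ih\cdot\xi}-1|^2\leq C\min(|h|^2|\xi|^2,\,1)$ and the fact that the symbol of the fractional Laplacian is $|\xi|^{2s}$ (alternatively, a direct argument from the Gagliardo double integral is available). This delivers uniform $L^2$-equicontinuity of $\{u_n\}$. For $q\in[2,2^*_s)$, interpolating via H\"older between the vanishing $L^2$-difference and the uniformly bounded $L^{2^*_s}$-difference yields $L^q$-equicontinuity; the range $q\in[1,2)$ is then handled by using again the bounded support to pass from $L^2$ to $L^1$.

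The main technical point is producing the translation estimate with the correct scaling $|h|^{2s}$ from the Gagliardo seminorm and checking that the interpolation constants stay uniform in $n$; once this is in place, the three hypotheses of the Riesz-Fr\'echet-Kolmogorov theorem are all satisfied and we obtain the relative compactness of $\{u_n\}$ in $L^q(\R^N)$ for every $q\in[1,2^*_s)$, which is precisely the asserted compact embedding.
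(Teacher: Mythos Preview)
The paper does not give its own proof of this statement: Theorem~\ref{Sembedding} is simply quoted from \cite{sv1} (and the underlying argument is essentially the one in \cite{DPV}), so there is no in-paper proof against which to compare. That said, your outline is a correct and self-contained route to the result.

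Your three ingredients are exactly what the Riesz--Fr\'echet--Kolmogorov criterion requires. Tightness is immediate from $u_n\equiv 0$ outside $\bar\Omega$; the uniform $L^q$-bound follows from the fractional Sobolev inequality $\|u\|_{L^{2^*_s}(\R^N)}\le C[u]_{H^s(\R^N)}$ together with H\"older on the bounded support; and the translation estimate
\[
\|u(\cdot+h)-u\|_{L^2(\R^N)}^2\le C\,|h|^{2s}\,[u]_{H^s(\R^N)}^2
\]
is precisely what one gets from Plancherel and the elementary bound $|e^{ih\cdot\xi}-1|^2\le C\min(|h|^{2}|\xi|^{2},1)\le C'|h|^{2s}|\xi|^{2s}$. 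The interpolation step for $q\in[2,2^*_s)$ and the passage to $q\in[1,2)$ via the common bounded support are both routine; note only that for the latter the translated functions $u_n(\cdot+h)$ are supported in $\bar\Omega-h$, so you should apply H\"older on $\bar\Omega\cup(\bar\Omega-h)$, whose measure is uniformly bounded for small $|h|$.

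For context, the arguments in \cite{sv1} and \cite{DPV} typically phrase the compactness slightly differently---often by first reducing to $L^2$-compactness (via an extension/restriction to a fixed ball and the classical fractional Rellich theorem) and then interpolating up to $q<2^*_s$---rather than verifying Kolmogorov's criterion directly. Your approach is an equally standard alternative and has the advantage of being entirely elementary once the translation estimate is in hand.
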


\noindent
Let us denote by $\{e_{k}, \lambda_{k}\}_{k\in \N}$ the eigenvalues and corresponding eigenfunctions of the fractional Laplacian operator $(-\Delta_{\R^{N}})^{s}$ with homogeneous boundary condition in $\R^{N}\setminus \Omega$, that is,
\begin{equation*}
\left\{
\begin{array}{ll}
(-\Delta_{\R^{N}})^{s} e_{k}=\lambda_{k} e_{k} &\mbox{ in } \Omega\\
e_{k}=0 &\mbox{ in } \R^{N}\setminus \Omega.
\end{array}
\right.
\end{equation*}
We recall that $\lambda_{1}$ is simple, $0<\lambda_{1}<\lambda_{2}\leq \dots\leq \lambda_{k}\leq \lambda_{k+1}\leq \dots$, $\lambda_{k}\rightarrow +\infty$ and $e_{k}$ are H\"older continuous up to the boundary (differently from the ones of $(-\Delta_{\Omega})^{s}$ that are as smooth up the boundary as the boundary allows).

\noindent
As in Section $2$, for any $i\geq 2$, we denote by $P: \X\rightarrow H^{0}_{i}$ and $Q: \X\rightarrow H_{i-1}\oplus H_{i}^{\perp}$ the orthogonal projections, where $H_{i}^{0}={\rm span}\{e_{i}, \dots, e_{j}\}$.
The next lemma is proved in \cite{sv2}.
\begin{lem}\cite{sv2}\label{SVlem}
The following inequalities holds
\begin{align}
&\iint_{\R^{2N}} \frac{|u(x)- u(y)|^{2} dx}{|x-y|^{N+2s}} \, dxdy \leq \lambda_{j} \int_{\Omega} |u|^{2} dx \mbox{ for all } u\in H_{j} \label{7}, \\
&\iint_{\R^{2N}} \frac{|u(x)- u(y)|^{2} dx}{|x-y|^{N+2s}} \, dxdy \geq \lambda_{j+1} \int_{\Omega} |u|^{2} dx \mbox{ for all } u\in H_{j}^{\perp} \label{8}. \\
\end{align}
\end{lem}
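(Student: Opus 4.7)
The plan is to proceed via the spectral decomposition of the fractional Dirichlet Laplacian on $\Omega$. I would first recall from the Servadei--Valdinoci theory \cite{sv1,sv2} that, after a suitable normalisation, $\{e_{k}\}_{k\in\N}$ is an orthonormal basis of $L^{2}(\Omega)$. Testing the eigenvalue equation $(-\Delta_{\R^{N}})^{s}e_{k}=\lambda_{k}e_{k}$ against $e_{\ell}$ in its weak form shows
$$\langle e_{k},e_{\ell}\rangle_{\X} \;=\; \lambda_{k}\int_{\Omega}e_{k}e_{\ell}\,dx \;=\; \lambda_{k}\,\delta_{k\ell},$$
so that $\{e_{k}/\sqrt{\lambda_{k}}\}_{k\in\N}$ is orthonormal in $\X$. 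Completeness in $\X$ is inherited from completeness in $L^{2}(\Omega)$ via the compact embedding $\X\hookrightarrow L^{2}(\Omega)$ (Theorem \ref{Sembedding}) and the spectral theorem for compact self-adjoint operators applied to the inverse of $(-\Delta_{\R^{N}})^{s}$.

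\textbf{Proof of the first inequality.} For $u\in H_{j}$, expand $u=\sum_{k=1}^{j}c_{k}e_{k}$. By the orthogonality properties above together with the monotonicity $\lambda_{k}\leq\lambda_{j}$ for $k\leq j$, I would conclude
$$\|u\|_{\X}^{2} \;=\; \sum_{k=1}^{j}\lambda_{k}\,c_{k}^{2} \;\leq\; \lambda_{j}\sum_{k=1}^{j}c_{k}^{2} \;=\; \lambda_{j}\,\|u\|_{L^{2}(\Omega)}^{2}.$$

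\textbf{Proof of the second inequality.} Since $H_{j}^{\perp}$ is the $\X$-orthogonal complement of $H_{j}$ and the system $\{e_{k}\}$ is complete in $\X$, every $u\in H_{j}^{\perp}$ admits the expansion $u=\sum_{k\geq j+1}c_{k}e_{k}$. Using now $\lambda_{k}\geq\lambda_{j+1}$ for $k\geq j+1$,
$$\|u\|_{\X}^{2} \;=\; \sum_{k\geq j+1}\lambda_{k}\,c_{k}^{2} \;\geq\; \lambda_{j+1}\sum_{k\geq j+1}c_{k}^{2} \;=\; \lambda_{j+1}\,\|u\|_{L^{2}(\Omega)}^{2}.$$

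\textbf{Main obstacle.} The only nontrivial ingredient is the completeness of the eigenfunction family in $\X$ (as opposed to in $L^{2}(\Omega)$, where it is standard). This is a routine consequence of the spectral theorem once one observes that the solution operator $T:L^{2}(\Omega)\to\X$ attached to $(-\Delta_{\R^{N}})^{s}$ is compact thanks to Theorem \ref{Sembedding}. Everything else is a monotone-eigenvalue Rayleigh-quotient estimate, so the lemma reduces to material already established in detail in \cite{sv2} and is therefore quoted rather than re-proved.
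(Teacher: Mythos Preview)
Your argument is correct and is precisely the standard spectral-decomposition proof. Note, however, that the paper does not actually prove Lemma~\ref{SVlem}: it is stated with the citation \cite{sv2} and no proof is given (just as the analogous Lemma~\ref{Slem} for the spectral Laplacian is introduced with ``a direct calculation shows that''). So there is nothing to compare against; your sketch simply supplies the routine Rayleigh-quotient computation that the paper delegates to \cite{sv2}, and you yourself acknowledge this in your final paragraph.
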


\noindent
We say that a a function $u\in \X$ is a weak solution to \eqref{P} if it satisfies the identity
$$
\iint_{\R^{2N}} \frac{(u(x)-u(y))}{|x-y|^{N+2s}}(v(x)-v(y))\, dx dy=\lambda \int_{\Omega} u v \, dx+\int_{\Omega} f(x, u) v\, dx
$$
for any $v\in \X$.
For this reason, we will look for critical points of the Euler-Lagrange functional $I_{\lambda}: \X\rightarrow \R$ defined by
\begin{equation}
I_{\lambda}(u)=\frac{1}{2} \|u\|_{\X}^{2}-\frac{\lambda}{2}\int_{\Omega} u^{2}\, dx-\int_{\Omega} F(x, u) \,dx.
\end{equation}
Since we will proceed as in Section $2$, we prove some technical lemmas which will be fundamental to deduce Theorem \ref{thm1}. With suitable modifications, it is easy to see that the next lemma can be proved following the lines of the proof of Lemma \ref{lem1s}.
\begin{lem}\label{lem1}
Assume that $(f1)$ and $(f4)$ hold. Then, for any $\delta\in (0, \min\{\lambda_{i+1}-\lambda_{i}, \lambda_{i}-\lambda_{i-1}\})$ there exists $\e_{0}>0$ such that for any $\lambda\in [\lambda_{i}-\delta, \lambda_{i}+\delta]$ the unique critical point $u$ of $I_{\lambda}$ constrained on $H_{i-1}\oplus H_{i}^{\perp}$ such that $I_{\lambda}(u)\in [-\e_{0}, \e_{0}]$ is the trivial one.
\end{lem}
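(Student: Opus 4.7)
The plan is to mimic the proof of Lemma \ref{lem1s} essentially verbatim, replacing the extension framework $(\Y, \mu_k)$ by the Gagliardo framework $(\X, \lambda_k)$ and substituting the ingredients accordingly: Lemma \ref{Slem} is replaced by Lemma \ref{SVlem}, and Theorem \ref{SSembedding} by Theorem \ref{Sembedding}. The functional now takes the form
\[
I_\lambda(u) = \tfrac{1}{2}\|u\|_{\X}^{2} - \tfrac{\lambda}{2}\int_\Omega u^{2}\,dx - \int_\Omega F(x,u)\,dx,
\]
and its derivative is tested in $\X$ against elements of $H_{i-1}\oplus H_i^{\perp}$.

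I would argue by contradiction, assuming the existence of $\delta_0 \in (0, \min\{\lambda_{i+1}-\lambda_i,\lambda_i-\lambda_{i-1}\})$, a sequence $\lambda_n \in [\lambda_i - \delta_0, \lambda_i + \delta_0]$ with $\lambda_n \to \lambda$, and nontrivial constrained critical points $u_n \in H_{i-1}\oplus H_i^{\perp}$ with $I_{\lambda_n}(u_n)\to 0$. Setting $A_n = \{x\in\Omega : |u_n(x)|\geq T\}$, the relation
\[
2 I_{\lambda_n}(u_n) - \langle I'_{\lambda_n}(u_n), u_n\rangle = \int_\Omega (f(x,u_n)u_n - 2F(x,u_n))\,dx \geq c_2 \int_{A_n}|u_n|^{\beta}\,dx
\]
combined with assumption $(f4)$ would yield $\int_{A_n}|u_n|^{\beta}\,dx \to 0$, and hence $\|u_n\|_{L^\beta(\Omega)}^{\beta} \leq \int_{A_n}|u_n|^{\beta}\,dx + |\Omega| T^{\beta}$ remains bounded.

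Next, I would decompose $u_n = v_n + w_n \in H_{i-1}\oplus H_i^{\perp}$ and test the Euler--Lagrange equation against $w_n - v_n$. Using the inequalities \eqref{7} and \eqref{8} of Lemma \ref{SVlem} together with $\lambda_n \in (\lambda_{i-1}, \lambda_{i+1})$, I would derive
\[
\int_\Omega f(x,u_n)(w_n - v_n)\,dx \geq c_3 \|u_n\|_{\X}^{2},
\]
with $c_3 = \min\{(\lambda_{i+1}-\lambda_n)/\lambda_{i+1},\,(\lambda_n - \lambda_{i-1})/\lambda_{i-1}\} > 0$. On the other hand, H\"older's inequality and Theorem \ref{Sembedding} bound the left-hand side by
\[
2C\|u_n\|_{\X}\Big(\int_\Omega |f(x,u_n)|^{(p+1)/p}\,dx\Big)^{p/(p+1)},
\]
so that
\[
\|u_n\|_{\X} \leq c_4 \Big(\int_\Omega |f(x,u_n)|^{(p+1)/p}\,dx\Big)^{p/(p+1)}. \tag{$\ast$}
\]
Using $(f1)$, the condition $\beta > 2Np/(N+2s)$ (which guarantees $\beta/(\beta - p) < 2^{*}_{s}$), the $L^\beta$-bound above and Theorem \ref{Sembedding}, a H\"older-type estimate analogous to \eqref{tv3s} shows that $\{u_n\}$ is bounded in $\X$. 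Hence, up to a subsequence, $u_n \rightharpoonup u$ in $\X$ and $u_n \to u$ in $L^q(\Omega)$ for every $q \in [1, 2^{*}_{s})$.

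To identify the limit, I would apply Fatou's lemma to the nonnegative integrand $f(x,u_n)u_n - 2F(x,u_n)$ (nonnegativity from $(f4)$) and combine with $2I_{\lambda_n}(u_n) - \langle I'_{\lambda_n}(u_n), u_n\rangle \to 0$ to get $\int_\Omega (f(x,u)u - 2F(x,u))\,dx = 0$, which by $(f4)$ and $(f2)$ forces $u\equiv 0$. Finally, I would distinguish two cases as in Lemma \ref{lem1s}: if $u_n \to 0$ strongly in $\X$, then $(f1)$--$(f2)$ combined with $(\ast)$ and Theorem \ref{Sembedding} would yield $1 \leq 0$, a contradiction; if instead $\|u_n\|_{\X}\geq \alpha>0$ along a subsequence, then $(f1)$ and the dominated convergence theorem (applied using $u=0$) would force the right-hand side of $(\ast)$ to vanish, again a contradiction. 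The main obstacle is just the careful bookkeeping of the H\"older exponents to ensure the constraint $\beta/(\beta-p)<2^{*}_{s}$ is used where needed; everything else is a direct transcription from the extension setting to the Gagliardo setting, since Lemma \ref{SVlem} plays the same role as Lemma \ref{Slem}.
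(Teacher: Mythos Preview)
Your proposal is correct and follows exactly the approach the paper indicates: the paper does not give an independent proof of Lemma \ref{lem1} but simply states that it is obtained by following the lines of the proof of Lemma \ref{lem1s} with the obvious substitutions $(\Y,\mu_k)\rightsquigarrow(\X,\lambda_k)$, Lemma \ref{Slem} $\rightsquigarrow$ Lemma \ref{SVlem}, and Theorem \ref{SSembedding} $\rightsquigarrow$ Theorem \ref{Sembedding}, which is precisely what you do.
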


\begin{lem}\label{lem2}
Assume that $(f1)$ and $(f4)$ hold, $\lambda\in (\lambda_{i-1}, \lambda_{i+1})$ and $\{u_{n}\}_{n\in \N}\subset \X$ such that $I_{\lambda}(u_{n})$ is bounded, $Pu_{n}\rightarrow 0$ and $Q \nabla I_{\lambda}(u_{n})\rightarrow 0$ as $n\rightarrow +\infty$. Then $\{u_{n}\}_{n\in \N}$ is bounded in $\X$.
\end{lem}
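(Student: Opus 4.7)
The plan is to adapt, with only cosmetic changes, the proof of Lemma \ref{lem2s}, replacing the extension norm $\|\cdot\|_{\Y}$ by the Gagliardo norm $\|\cdot\|_{\X}$, the spectral estimates of Lemma \ref{Slem} by the Rayleigh inequalities \eqref{7}--\eqref{8} of Lemma \ref{SVlem}, and the trace/embedding Theorem \ref{SSembedding} by the compact embedding Theorem \ref{Sembedding}. The bilinear form now defining $\nabla I_{\lambda}$ is
$$
\langle I'_{\lambda}(u), z\rangle=\iint_{\R^{2N}}\frac{(u(x)-u(y))(z(x)-z(y))}{|x-y|^{N+2s}}\,dx\,dy-\lambda\int_{\Omega} uz\,dx-\int_{\Omega} f(x,u)z\,dx,
$$
so all computations carry over formally.

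Arguing by contradiction, I assume $\|u_{n}\|_{\X}\to\infty$ and decompose $u_{n}=Pu_{n}+Qu_{n}$, $Qu_{n}=v_{n}+w_{n}\in H_{i-1}\oplus H_{i}^{\perp}$. First I estimate $\int_{\Omega} f(x,u_{n})Pu_{n}\,dx$ using $(f1)$ together with H\"older's inequality: since $H_{i}^{0}$ is finite-dimensional, all norms on it are equivalent, so $Pu_{n}\to 0$ in $\X$ implies $\|Pu_{n}\|_{L^{\infty}(\Omega)}\to 0$, yielding (as in \eqref{tv4s})
$$
\left|\int_{\Omega} f(x,u_{n})Pu_{n}\,dx\right|\leq c_{5}\,\|Pu_{n}\|_{L^{\infty}(\Omega)}\bigl(1+\|u_{n}\|_{L^{\beta}(\Omega)}^{p}\bigr).
$$
Combining this with the identity
$$
2I_{\lambda}(u_{n})-\langle Q\nabla I_{\lambda}(u_{n}),u_{n}\rangle=\int_{\Omega}(f(x,u_{n})u_{n}-2F(x,u_{n}))\,dx+\|Pu_{n}\|_{\X}^{2}-\lambda\|Pu_{n}\|_{L^{2}(\Omega)}^{2}-\int_{\Omega} f(x,u_{n})Pu_{n}\,dx,
$$
the second half of $(f4)$, and the hypothesis that $I_{\lambda}(u_{n})$ is bounded and $Pu_{n}, Q\nabla I_{\lambda}(u_{n})\to 0$, I obtain (since $1<p<\beta$)
$$
\frac{\|u_{n}\|_{L^{\beta}(\Omega)}^{p}}{\|u_{n}\|_{\X}}\longrightarrow 0.
$$

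With this key ratio in hand, I test $-Q\nabla I_{\lambda}(u_{n})$ successively against $v_{n}\in H_{i-1}$ and $w_{n}\in H_{i}^{\perp}$ and invoke \eqref{7}, \eqref{8}, $(f1)$, Theorem \ref{Sembedding} and H\"older's inequality to mimic the estimates leading to \eqref{16s} and \eqref{17s}, thereby concluding
$$
\frac{\|v_{n}\|_{\X}}{\|u_{n}\|_{\X}}\to 0,\qquad\frac{\|w_{n}\|_{\X}}{\|u_{n}\|_{\X}}\to 0.
$$
Since $Pu_{n}\to 0$ in $\X$ while $\|u_{n}\|_{\X}\to\infty$, trivially $\|Pu_{n}\|_{\X}/\|u_{n}\|_{\X}\to 0$, and the three ratios together yield the contradiction
$$
1=\frac{\|u_{n}\|_{\X}}{\|u_{n}\|_{\X}}\leq\frac{\|Pu_{n}\|_{\X}+\|v_{n}\|_{\X}+\|w_{n}\|_{\X}}{\|u_{n}\|_{\X}}\longrightarrow 0.
$$

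The step I expect to require most care is the first one: closing the loop that turns the $L^{\infty}$-smallness of $Pu_{n}$ into $o(\|u_{n}\|_{\X})$-control of $\int f(x,u_{n})Pu_{n}$. One must absorb the $\|u_{n}\|_{L^{\beta}}^{p}$ factor into the term $\int(f(x,u_{n})u_{n}-2F(x,u_{n}))\,dx$ supplied by $(f4)$; the condition $\beta>\tfrac{2Np}{N+2s}$ together with the Sobolev embedding $\X\hookrightarrow L^{\beta}(\Omega)$ is precisely what makes this absorption work, and nothing in the passage from the extension setting to the Gagliardo setting affects this argument.
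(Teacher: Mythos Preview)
Your proposal is correct and follows essentially the same approach as the paper's own proof: contradiction, the estimate \eqref{tv4} on $\int_{\Omega} f(x,u_{n})Pu_{n}\,dx$, the identity for $2I_{\lambda}(u_{n})-\langle Q\nabla I_{\lambda}(u_{n}),u_{n}\rangle$ yielding \eqref{tv60}, and then testing against $v_{n}$ and $w_{n}$ to obtain \eqref{16}--\eqref{18} and the final contradiction. The only cosmetic difference is that the paper spells out the derivation of the identity via $\nabla I_{\lambda}(u_{n})=u_{n}-((-\Delta_{\R^{N}})^{s})^{-1}(\lambda u_{n}+f(x,u_{n}))$ and the self-adjointness of $P$, whereas you quote it directly from the spectral case.
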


\begin{proof}
Suppose by contradiction that, up to a subsequence, $\|u_{n}\|_{\X}\rightarrow \infty$ as $n\rightarrow \infty$. \\
Set $u_{n}=Pu_{n}+Qu_{n}$.
By using $(f1)$, H\"older's inequality and the fact that all norms in $H_{i}^{0}$ are equivalent, we get
\begin{align}\label{tv4}
\left|\int_{\Omega} f(x, u_{n}) Pu_{n}\, dx \right|&\leq \int_{\Omega} |f(x, u_{n})| |Pu_{n}|\, dx \nonumber \\
&\leq c_{1} \left( \int_{\Omega} |Pu_{n}|\, dx + \int_{\Omega} |Pu_{n}| |u_{n}|^{p}\, dx \right) \nonumber \\
&\leq c_{1} \|Pu_{n}\|_{L^{1}(\Omega)} + c_{1} \left( \int_{\Omega} |u_{n}|^{\beta} dx \right)^{\frac{p}{\beta}}\left(\int_{\Omega} |Pu_{n}|^{\frac{\beta}{\beta-p}}dx \right)^{\frac{\beta-p}{\beta}}\nonumber \\
&\leq c_{5} \|Pu_{n}\|_{L^{\infty}(\Omega)} (1+ \|u_{n}\|_{L^{\beta}(\Omega)}^{p}), 
\end{align}
with $c_{5}>0$. Now, we observe that
\begin{align*}
\langle Q\nabla I_{\lambda}(u_{n}), u_{n}\rangle&=\langle \nabla I_{\lambda}(u_{n}), u_{n}\rangle-\langle P\nabla I_{\lambda}(u_{n}), u_{n}\rangle \nonumber\\
&=\|u_{n}\|_{\X}^{2}-\lambda \|u_{n}\|^{2}_{L^{2}(\Omega)}-\int_{\Omega} f(x, u_{n})u_{n}\, dx \nonumber\\
&-\langle P(u_{n}-(-(-\Delta)^{s})^{-1}(\lambda u_{n}+f(x, u_{n}))), u_{n}\rangle.
\end{align*}
Since $\langle Pu, v\rangle_{\X}=\langle u, Pv\rangle_{\X}$ for any $u, v\in \X$, we can see that
\begin{align*}
\langle P(u_{n}-(-(-\Delta)^{s})^{-1}(\lambda u_{n}+f(x, u_{n}))), u_{n}\rangle&=\|Pu_{n}\|_{\X}^{2}-\lambda \langle Pu_{n}, (-(-\Delta)^{s})^{-1} u_{n}\rangle-\langle Pu_{n}, (-(-\Delta)^{s})^{-1} f(x, u_{n})\rangle \nonumber \\
&=\|Pu_{n}\|{\X}^{2}-\lambda \|Pu_{n}\|_{L^{2}(\Omega)}^{2}-\int_{\Omega} f(x, u_{n})Pu_{n}\, dx.
\end{align*}
Thus $(f4)$ and \eqref{tv4} give
\begin{align}\label{tv5}
&2I_{\lambda}(u_{n})- \langle Q\nabla I_{\lambda}(u_{n}), u_{n}\rangle \nonumber \\
&= \int_{\Omega} (f(x, u_{n}) u_{n} - 2F(x, u_{n}))\, dx + \|Pu_{n}\|{\X}^{2}-\lambda \|Pu_{n}\|_{L^{2}(\Omega)}^{2} - \int_{\Omega} f(x, u_{n}) Pu_{n} \, dx \nonumber \\
&\geq c_{2} \|u_{n}\|_{L^{\beta}(\Omega)}^{\beta} + \|Pu_{n}\|_{\X}^{2} -\lambda \|Pu_{n}\|_{L^{2}(\Omega)}^{2} - c_{5} \|Pu_{n}\|_{L^{\infty}(\Omega)} (1+ \|u_{n}\|_{L^{\beta}(\Omega)}^{p}).
\end{align}
Since $1<p<\beta$, ${\rm dim}H_{i}^{0}<+\infty$ and $\|Pu_{n}\|_{L^{\infty}(\Omega)} \rightarrow 0$ as $n\rightarrow \infty$, from \eqref{tv5} we can deduce that 
\begin{align}\label{tv60}
\frac{\|u_{n}\|_{L^{\beta}(\Omega)}^{p}}{\|u_{n}\|_{\X}}\rightarrow 0 \mbox{ as } n\rightarrow \infty. 
\end{align}
Set $Qu_{n}= v_{n}+ w_{n} \in H_{i-1}\oplus H_{i}^{\perp}$. By using $(f1)$, Theorem \ref{Sembedding}, \eqref{7} and H\"older's inequality we have
\begin{align*}
\langle Q\nabla I_{\lambda}(u_{n}), -v_{n} \rangle &= \lambda \|v_{n}\|_{L^{2}(\Omega)}^{2} - \|v_{n}\|_{\X}^{2} + \int_{\Omega} f(x, u_{n})v_{n}\, dx\\
&\geq \frac{\lambda- \lambda_{i-1}}{\lambda_{i-1}} \|v_{n}\|_{\X}^{2} - \int_{\Omega} |f(x, u_{n})||v_{n}|\, dx \\
&\geq \frac{\lambda- \lambda_{i-1}}{\lambda_{i-1}}\|v_{n}\|_{\X}^{2} -c_{1} \int_{\Omega} (|u_{n}|^{p} |v_{n}| + |v_{n}|)\, dx \\
&\geq \frac{\lambda- \lambda_{i-1}}{\lambda_{i-1}}\|v_{n}\|_{\X}^{2} -c_{1} \left(\int_{\Omega} |u_{n}|^{\beta}dx\right)^{\frac{p}{\beta}} \left(\int_{\Omega} |v_{n}|^{\frac{\beta}{\beta-p}} dx\right)^{\frac{\beta-p}{\beta}} - c_{1}\|v_{n}\|_{L^{1}(\Omega)} \\
&\geq \frac{\lambda- \lambda_{i-1}}{\lambda_{i-1}}\|v_{n}\|^{2}_{\X} -c_{1} C\|v_{n}\|_{\X} (1+ \|u_{n}\|_{L^{\beta}(\Omega)}^{p}). 
\end{align*}
Arguing as in the proof of Lemma \ref{lem2s}, we can see that
\begin{align}\label{16}
\frac{\|v_{n}\|_{\X}}{\|u_{n}\|_{\X}}\rightarrow 0, \quad
\frac{\|w_{n}\|_{\X}}{\|u_{n}\|_{\X}}\rightarrow 0 \mbox{ as } n\rightarrow \infty, 
\end{align}
and
\begin{align}\label{18}
\frac{\|Pu_{n}\|_{\X}}{\|u_{n}\|_{\X}}\rightarrow 0 \mbox{ as } n\rightarrow \infty. 
\end{align}
Putting together \eqref{16} and \eqref{18} we can see that
\begin{align*}
1= \frac{\|u_{n}\|_{\X}}{\|u_{n}\|_{\X}}\leq \frac{\|v_{n}\|_{\X}+ \|Pu_{n}\|_{\X} + \|w_{n}\|_{\X}}{\|u_{n}\|_{\X}}\rightarrow 0 \mbox{ as } n\rightarrow \infty, 
\end{align*}
which is impossible. 
\end{proof}

\begin{lem}\label{lem3}
Assume that $(f1)$ and $(f4)$. Then, for any $\delta\in (0, \min\{\lambda_{i+1}-\lambda_{i}, \lambda_{i}-\lambda_{i-1}\})$ there exists $\e_{0}>0$ such that for any $\lambda\in [\lambda_{i}-\delta, \lambda_{i}+\delta]$ and for any $\e_{1}, \e_{2}\in (0, \e_{0})$ with $\e_{1}<\e_{2}$, the condition $(\nabla) (I_{\lambda}, H_{i-1}\oplus H_{i}^{\perp}, \e_{1}, \e_{2})$ holds.
\end{lem}

\begin{proof}
The proof follows the lines of the proof of Lemma \ref{lem3s} replacing Lemma \ref{lem1s},  Lemma \ref{lem2s} and Theorem \ref{SSembedding} by Lemma \ref{lem1},  Lemma \ref{lem2}, and  Theorem \ref{Sembedding} respectively. Moreover, in this case, to prove that $u_{n}$ converges strongly in $\X$, we use that fact that $K=(-(-\Delta_{\R^{N}})^{s})^{-1}: L^{q'}(\Omega)\rightarrow \X$ is compact, with $q\in [1, 2^{*}_{s})$; see Section $2.4$ in \cite{MBMS}.
\end{proof}

\noindent
Now, we define the following sets: for fixed $i, k\in \N$ and $R, \varrho >0$, let
\begin{align*}
&B_{i}(R)= \{u\in H_{i} : \|u\|_{\X}\leq R\}, \\
&T_{i-1, i}(R) = \{u \in H_{i-1}: \|u\|_{\X}\leq R\} \cup \{u \in H_{i} : \|u\|_{\X}=R\},  \\
&S_{k}^{+}(\varrho)= \{u \in H_{k}^{\perp}: \|u\|_{\X}= \varrho\}, \\
&B_{k}^{+}(\varrho)= \{u\in H_{k}^{\perp}: \|u\|_{\X}\leq \varrho\}.
\end{align*}

\begin{lem}\label{lem4}
Assume that $(f1)$-$(f3)$ and $(f5)$ hold. Then, for any $\lambda\in (\lambda_{i-1}, \lambda_{i+1})$, there are $R>\varrho>0$ such that
$$
0=\sup I_{\lambda}(T_{i-1, i}(R))<\inf I_{\lambda}(S_{i-1}^{+}(\varrho)).
$$
\end{lem}

\begin{proof}
By using \eqref{7} and the assumption $(f5)$, for any $u\in H_{i-1}$ and $\lambda\in (\lambda_{i-1}, \lambda_{i})$ we have
\begin{align}\label{19}
I_{\lambda}(u)&= \frac{1}{2} \iint_{\R^{2N}} \frac{|u(x)- u(y)|^{2}}{|x-y|^{N+2s}}\, dxdy - \frac{\lambda}{2} \int_{\Omega} |u|^{2} dx - \int_{\Omega} F(x, u)\, dx \nonumber \\
&\leq \frac{\lambda_{i-1}- \lambda}{2\lambda_{i-1}}\|u\|_{\X}^{2} \leq 0. 
\end{align}
Recalling $(f3)$ and \eqref{7}, for any $u\in H_{i}$ and $\lambda\in (\lambda_{i-1}, \lambda_{i})$ we get
\begin{align}\label{tv7}
I_{\lambda}(u)&= \frac{1}{2} \iint_{\R^{2N}} \frac{|u(x)- u(y)|^{2}}{|x-y|^{N+2s}}\, dxdy - \frac{\lambda}{2} \int_{\Omega} |u|^{2} dx - \int_{\Omega} F(x, u)\, dx  \nonumber \\
&\leq \frac{\lambda_{i}- \lambda}{2\lambda_{i}} \|u\|_{\X}^{2} - \frac{c_{6}}{2} \|u\|_{L^{2}(\Omega)}^{2} + M_{1}|\Omega| \nonumber \\
& \leq \frac{\lambda_{i}- \lambda- c_{6}}{2\lambda_{i}} \|u\|_{\X}^{2} + M_{1}|\Omega|. 
\end{align}
Taking $c_{6}= 2(\lambda_{i}- \lambda)$, from \eqref{tv7} we deduce that 
\begin{align}\label{20}
I_{\lambda}(u)\rightarrow - \infty \mbox{ as } \|u\|_{\X}\rightarrow \infty. 
\end{align}
By exploiting $(f1)$ and $(f2)$, we can see that for any $u\in H_{i-1}^{\perp}$
\begin{align}\label{22}
I_{\lambda}(u)&= \frac{1}{2} \iint_{\R^{2N}} \frac{|u(x)- u(y)|^{2}}{|x-y|^{N+2s}}\, dxdy - \frac{\lambda}{2} \int_{\Omega} |u|^{2} dx - \int_{\Omega} F(x, u)\, dx  \nonumber \\
&\geq \frac{\lambda_{i}- \lambda- \e}{2 \lambda_{i}} \|u\|_{\X}^{2} - CC_{\e} \|u\|_{\X}^{p+1}. 
\end{align} 
Choosing $\e= \frac{\lambda_{i}- \lambda}{2}>0$, and by using $\lambda \in (\lambda_{i-1}, \lambda_{i})$, $p+1>2$, \eqref{19}, \eqref{20} and \eqref{22}, we can deduce that there exist  $R>\varrho >0$ such that
\begin{align*}
\sup I_{\lambda} (T_{i-1, i}(R))< \inf I_{\lambda} (S_{i-1}^{+}(\varrho)).
\end{align*}
\end{proof}

\noindent
The next result can be obtained following the proof of Lemma \ref{lem5s}.
\begin{lem}\label{lem5}
Assume that $(f5)$ holds. Then, for $R>0$ in Lemma \ref{lem4} and for any $\e>0$ there exists $\delta'_{i}>0$ such that for any $\lambda\in (\lambda_{i}-\delta'_{i}, \lambda_{i})$ it holds
$$
\sup I_{\lambda}(B_{i}(R))<\e.
$$
\end{lem}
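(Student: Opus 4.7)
The plan is to mimic the proof of Lemma \ref{lem5s} line by line, replacing the spectral ingredients by their integral counterparts. Specifically, the only nonlocal facts used are: (i) $F(x,t)\geq 0$ from $(f5)$, (ii) the eigenvalue inequality on the finite-dimensional space $H_i$, and (iii) the definition of $I_\lambda$. For the integral problem, (ii) is provided by \eqref{7} in Lemma \ref{SVlem}, which states that $\|u\|_{\X}^{2}\leq \lambda_{i}\|u\|_{L^{2}(\Omega)}^{2}$ for every $u\in H_{i}$, playing exactly the role that \eqref{7s} played in the spectral case.

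First, I would fix $u\in H_{i}$ and use $(f5)$ to drop the nonnegative term $\int_{\Omega}F(x,u)\,dx$, obtaining
$$
I_{\lambda}(u)\leq \frac{1}{2}\|u\|_{\X}^{2}-\frac{\lambda}{2}\int_{\Omega}|u|^{2}\,dx.
$$
Then, for $\lambda<\lambda_{i}$, I would apply \eqref{7} to the $L^{2}$-term, which, since $\lambda>0$ may be assumed without loss of generality (for $\lambda\leq 0$ the estimate is even easier when combined with Theorem \ref{Sembedding}), yields
$$
I_{\lambda}(u)\leq \frac{1}{2}\|u\|_{\X}^{2}-\frac{\lambda}{2\lambda_{i}}\|u\|_{\X}^{2}=\frac{\lambda_{i}-\lambda}{2\lambda_{i}}\|u\|_{\X}^{2}.
$$

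Next, restricting to the ball $B_{i}(R)=\{u\in H_{i}:\|u\|_{\X}\leq R\}$, I would bound
$$
\sup I_{\lambda}(B_{i}(R))\leq \frac{\lambda_{i}-\lambda}{2\lambda_{i}}R^{2}.
$$
Given $\e>0$, I would then set $\delta'_{i}:=\dfrac{2\lambda_{i}\e}{R^{2}}$, so that for every $\lambda\in(\lambda_{i}-\delta'_{i},\lambda_{i})$ one has $\lambda_{i}-\lambda<\delta'_{i}$, whence
$$
\sup I_{\lambda}(B_{i}(R))\leq \frac{\lambda_{i}-\lambda}{2\lambda_{i}}R^{2}<\frac{\delta'_{i}}{2\lambda_{i}}R^{2}=\e,
$$
which is the desired conclusion.

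There is no real obstacle: the entire argument is an elementary one-line computation once Lemma \ref{SVlem} is invoked. The only subtlety worth flagging is the sign of $\lambda$ and the requirement $\lambda<\lambda_{i}$, which is built into the conclusion via the choice of the interval $(\lambda_{i}-\delta'_{i},\lambda_{i})$; in particular, taking $\delta'_{i}\leq \lambda_{i}$ guarantees $\lambda>0$ and the use of \eqref{7} is legitimate. This is why the author remarks that the proof ``can be obtained following the proof of Lemma \ref{lem5s}''—it is literally the same calculation with $\mu_{i}\leftrightarrow\lambda_{i}$ and $\|\cdot\|_{\Y}\leftrightarrow\|\cdot\|_{\X}$.
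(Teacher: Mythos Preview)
Your proof is correct and follows exactly the same route as the paper's own argument (which simply refers back to Lemma~\ref{lem5s}): drop the $F$-term via $(f5)$, apply the eigenvalue inequality \eqref{7} on $H_i$ to get $I_\lambda(u)\le\frac{\lambda_i-\lambda}{2\lambda_i}\|u\|_{\X}^2$, and then choose $\delta_i'$ proportional to $\lambda_i\e/R^2$. The only cosmetic difference is the constant in $\delta_i'$ (the paper's choice in Lemma~\ref{lem5s} corresponds to $\lambda_i\e/R^2$, yielding a bound $<\e/2$, while yours gives $<\e$), and your remark on ensuring $\lambda>0$ is a harmless refinement the paper leaves implicit.
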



\begin{lem}\label{lem6}
Assume that $(f1)$ and $(f4)$ hold. Then $I_{\lambda}$ verifies the Palais-Smale condition.
\end{lem}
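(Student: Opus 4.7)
The plan is to mirror the argument of Lemma \ref{lem6s}, with only the passages that genuinely change in the ``integral'' framework highlighted. Let $\{u_n\}_{n\in\N}\subset\X$ be a Palais--Smale sequence for $I_\lambda$, i.e.\ $\{I_\lambda(u_n)\}_n$ is bounded and $I'_\lambda(u_n)\to 0$ in $\X^*$. Once boundedness of $\{u_n\}_{n\in\N}$ in $\X$ is established, the standard compactness argument of \cite{MBMS, sv1} gives strong convergence: the relation
$$
u_n=Pu_n+K\bigl(\lambda u_n+f(x,u_n)\bigr)+o(1),
$$
where $K=(-(-\Delta_{\R^N})^s)^{-1}: L^{q'}(\Omega)\to\X$ is compact for $q\in[1,2^{*}_s)$ (as recalled in the proof of Lemma \ref{lem3}), together with Theorem \ref{Sembedding} and $(f1)$, forces $u_n\to u$ strongly in $\X$. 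So the crux is boundedness.

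Arguing by contradiction, I suppose $\|u_n\|_{\X}\to\infty$ up to a subsequence, and write $u_n=Pu_n+v_n+w_n$ with $v_n\in H_{i-1}$, $w_n\in H_i^{\perp}$. By $(f4)$, a routine integration of the inequality $f(x,t)t-2F(x,t)>0$ together with the growth $f(x,t)t-2F(x,t)\geq c_2|t|^\beta$ for $|t|\geq T$ produces constants $c_7,c_8>0$ with
\begin{equation*}
f(x,t)t-2F(x,t)\geq c_7|t|-c_8 \qquad \text{for all }(x,t)\in\Omega\times\R.
\end{equation*}
Combining this with the boundedness of $I_\lambda(u_n)$ and $I'_\lambda(u_n)\to 0$,
\begin{equation*}
2I_\lambda(u_n)-\langle I'_\lambda(u_n),u_n\rangle=\int_\Omega\bigl(f(x,u_n)u_n-2F(x,u_n)\bigr)\,dx\geq c_7\|u_n\|_{L^1(\Omega)}-c_8|\Omega|,
\end{equation*}
so that after inserting $u_n=Pu_n+v_n+w_n$, using the equivalence of norms on the finite-dimensional space $H_i^0$, and exploiting the continuous embedding $\X\hookrightarrow L^1(\Omega)$ from Theorem \ref{Sembedding}, I obtain the analogue of the inequality displayed in Lemma \ref{lem6s}:
\begin{equation*}
c'_9\|Pu_n\|_{\X}\leq c_{10}\bigl(\|v_n\|_{\X}+\|w_n\|_{\X}+1\bigr)+ C\bigl(I_\lambda(u_n)+\|I'_\lambda(u_n)\|_{\X^*}\|u_n\|_{\X}\bigr).
\end{equation*}

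The key point is now to show that
\begin{equation*}
\frac{\|Pu_n\|_{\X}}{\|u_n\|_{\X}}\to 0\qquad\text{as } n\to\infty.
\end{equation*}
Since $PI'_\lambda(u_n)\to 0$ and $QI'_\lambda(u_n)\to 0$ individually, the arguments of Lemma \ref{lem2} give $\|v_n\|_{\X}/\|u_n\|_{\X}\to 0$ and $\|w_n\|_{\X}/\|u_n\|_{\X}\to 0$; dividing the displayed inequality by $\|u_n\|_{\X}$ and invoking $\|I'_\lambda(u_n)\|_{\X^*}\to 0$ and $\|u_n\|_{\X}\to\infty$ yields the claim. Summing the three ratios I reach
\begin{equation*}
1=\frac{\|u_n\|_{\X}}{\|u_n\|_{\X}}\leq\frac{\|Pu_n\|_{\X}+\|v_n\|_{\X}+\|w_n\|_{\X}}{\|u_n\|_{\X}}\to 0,
\end{equation*}
a contradiction. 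The main (and only) obstacle is the absence of the Ambrosetti--Rabinowitz condition, which is circumvented exactly by the linear lower bound provided by $(f4)$ and the fact that the ``bad'' component $Pu_n$ lives in a finite-dimensional subspace where all norms are comparable; once boundedness is secured, the compactness of $K$ does the rest.
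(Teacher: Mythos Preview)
Your proposal is correct and follows essentially the same approach as the paper: reduce the Palais--Smale condition to boundedness, argue by contradiction, use the linear lower bound $f(x,t)t-2F(x,t)\geq c_7|t|-c_8$ coming from $(f4)$ together with the finite-dimensionality of $H_i^0$ to control $\|Pu_n\|_{\X}/\|u_n\|_{\X}$, and then invoke the ratio estimates \eqref{16}--\eqref{17} from Lemma~\ref{lem2} to reach the contradiction $1\leq o(1)$. Your explicit mention of the compactness of $K=(-(-\Delta_{\R^N})^s)^{-1}$ for the passage from boundedness to strong convergence is a useful addition that the paper leaves implicit.
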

\begin{proof}
Let $\{u_{n}\}_{n\in \N}$ be a Palais-Smale sequence of $I_{\lambda}$.  
We have only to show that
\begin{equation}\label{tv8}
\frac{\|Pu_{n}\|_{\X}}{\|u_{n}\|_{\X}}\rightarrow 0 \mbox{ as } n\rightarrow +\infty.
\end{equation}
By using $(f4)$ and the equivalence of the norms on the finite-dimensional space, we get
\begin{align}\label{tv9}
2I_{\lambda}(u_{n})-\langle I'_{\lambda}(u_{n}), u_{n}\rangle&=\int_{\Omega} (f(x, u_{n})u_{n}-2F(x, u_{n})) \, dx \nonumber\\
&\geq \int_{\Omega} (c_{7}|u_{n}|-c_{8})\, dx \nonumber\\
&\geq \int_{\Omega} (c_{7}|Pu_{n}|-c_{7}|v_{n}|-c_{7}|w_{n}|-c_{8})\, dx \nonumber\\
&\geq c_{9}\|Pu_{n}\|_{\X}-c_{10}(\|v_{n}\|_{\X}+\|w_{n}\|_{\X}+1).
\end{align} 
Putting together \eqref{16} and \eqref{tv9}, we can deduce that \eqref{tv8} holds.
\end{proof}

\begin{proof}[Proof of Theorem \ref{thm1}]
In view of Lemma \ref{lem3}, Lemma \ref{lem4} and Lemma \ref{lem5}, we can take 
$$
a\in (0, \inf I_{\lambda}(S^{+}_{i-1}(\varrho))) \mbox{ and } b>\sup I_{\lambda}(B_{i}(R))
$$
such that $0<a<b<\e_{0}$. Thus the condition $(\nabla)(I_{\lambda}, H_{i-1}\oplus H_{i}^{\perp}, a, b)$ holds. By using Lemma \ref{lem6} and Theorem \ref{MS}, we can find two critical points $u_{1}, u_{2}\in \X$ such that $I_{\lambda}(u_{i})\in [a, b]$ for $i=1, 2$.
The existence of a third critical point will be obtained by applying the Linking Theorem. 
We prove that there are $\delta''_{i}>0$ and $R_{1}>\varrho_{1}>0$ such that for any $\lambda\in (\lambda_{i}-\delta''_{i}, \lambda_{i})$ it results
\begin{equation}\label{23}
\sup I_{\lambda}(T_{i, i+1}(R_{1}))<\inf I_{\lambda}(S^{+}(\varrho_{1})).
\end{equation}  
By using \eqref{8}, $(f1)$, and $(f2)$ we get 
\begin{align}\label{tv10}
I_{\lambda}(u)&=\frac{1}{2} \iint_{\R^{2N}} \frac{|u(x)-u(y)|^{2}}{|x-y|^{N+2s}} \, dx dy-\frac{\lambda}{2} \int_{\Omega} u^{2} \, dx-\int_{\Omega} F(x, u)\, dx \nonumber \\
&\geq \frac{\lambda_{i+1}-\lambda-\e}{2\lambda_{i+1}} \|u\|_{\X}^{2}-CC_{\e}\|u\|_{\X}^{p+1} \mbox{ for any } u\in H_{i}^{\perp}.
\end{align}
Then, taking $\e=\frac{\lambda_{i+1}-\lambda}{2}$, and recalling that $p>1$, from \eqref{tv10} it follows that there are $\varrho_{1}>0$ and $\alpha>0$ such that
\begin{equation}\label{24}
\inf I_{\lambda}(S^{+}_{i}(\varrho_{1}))\geq \alpha>0.
\end{equation}
On the other hand, by using \eqref{7} and $(f5)$, we deduce that
\begin{align}\label{tv11}
I_{\lambda}(u)&=\frac{1}{2} \iint_{\R^{2N}} \frac{|u(x)-u(y)|^{2}}{|x-y|^{N+2s}} \, dx dy-\frac{\lambda}{2} \int_{\Omega} u^{2} \, dx-\int_{\Omega} F(x, u)\, dx \nonumber \\
&\leq \frac{\lambda_{i}-\lambda}{2\lambda_{i}} \|u\|_{\X}^{2} \mbox{ for any } u\in H_{i}.
\end{align}
Therefore \eqref{tv11} implies that there exist $\delta''_{i}>0$ and $R_{1}>0$ such that for any $\lambda\in (\lambda_{i}-\delta''_{i}, \lambda_{i})$ we get
\begin{equation}\label{25}
I_{\lambda}(u)< \alpha \mbox{ for any } \|u\|_{\X}\leq R_{1}.
\end{equation}
Thus by using \eqref{7} and $(f5)$, we can see that for any $u\in H_{i+1}$ and $\lambda\in (\lambda_{i}-\delta''_{i}, \lambda_{i})$, we have 
\begin{align}\label{26}
I_{\lambda}(u)&=\frac{1}{2} \iint_{\R^{2N}} \frac{|u(x)-u(y)|^{2}}{|x-y|^{N+2s}} \, dx dy-\frac{\lambda}{2} \int_{\Omega} u^{2} \, dx-\int_{\Omega} F(x, u)\, dx \nonumber \\
&\leq \frac{\lambda_{i+1}-\lambda}{2\lambda_{i+1}} \|u\|_{\X}^{2}.
\end{align}
Putting together \eqref{24}, \eqref{25} and \eqref{26} we can deduce that \eqref{23} is verified. By applying the Linking Theorem, we can find a third critical point $u_{3}\in \X$ of $I_{\lambda}$ such that $I_{\lambda}(u)\geq \inf I_{\lambda}(S^{+}_{i}(\varrho_{1}))$.
Choosing $\delta_{i}=\min\{ \delta'_{i}, \delta''_{i} \}$, where $\delta'_{i}$ is given in Lemma \ref{lem5}, we can conclude that Theorem \ref{thm1} holds.
\end{proof}


\smallskip

\noindent
{\bf Acknowledgements.} 
The author warmly thanks the anonymous referee for her/his useful and nice comments on the paper. 
The manuscript was carried out under the auspices of the INDAM - Gnampa Project 2017 titled:{\it Teoria e modelli per problemi non locali}.

\end{document}